\documentclass[a4paper,11pt]{article}
\usepackage[utf8]{inputenc}
\usepackage{graphicx}
\usepackage{amsmath}
\usepackage{amsfonts}
\usepackage{amssymb}
\usepackage{amsthm}
\usepackage{MnSymbol}
\usepackage{pgf,tikz}
\usetikzlibrary{arrows}

\theoremstyle{definition}
\newtheorem{Theorem}{Theorem}[section]
\newtheorem{Definition}[Theorem]{Definition}
\newtheorem{Lemma}[Theorem]{Lemma}
\newtheorem{Proposition}[Theorem]{Proposition}
\newtheorem{Corollary}[Theorem]{Corollary}
\theoremstyle{definition}
\newtheorem{remark}{Remark}
\newtheorem{Question}{Question}

\newcommand{\C}{\mathcal{C}}

\newcommand{\Hm}{\mathcal{H}_m} 

\newcommand{\M}{\mathcal{M}}
\newcommand{\Md}{\mathcal{M}^{*}}
\newcommand{\Hy}{\mathcal{H}}
\newcommand{\B}{\mathcal{B}}
\newcommand{\Bd}{\mathcal{B}^{*}}
\newcommand{\cl}[1]{\sigma(#1)}
\newcommand{\clb}[1]{\sigma_\B(#1)}
\newcommand{\Am}{\mathcal{A}_m}
\newcommand{\A}{\mathcal{A}}

\newcommand{\He}{\Hm^{(1)}}
\def\rk{\operatorname{rk}}
\newcommand{\N}{\mathbb{N}}

\newcommand{\bigs}{\operatorname{bigstar}}
\newcommand{\vstar}{\operatorname{star}}
\renewcommand{\setminus}{\backslash}

\begin{document}

\title{Cryptomorphisms for abstract rigidity matroids}
\author{Emanuele Delucchi\thanks{Department of mathematics, University
  of Fribourg, CH-1700 Fribourg.}
\and
Tim Lindemann\thanks{Department of mathematics,
University of Bremen,
D-28359 Bremen.}
}

\maketitle

\begin{abstract}
  This note contributes to the structure theory of abstract rigidity
  matroids in general dimension.
  In the spirit of classical matroid theory, we prove several cryptomorphic characterizations of abstract
  rigidity matroids (in terms of circuits, cocircuits, bases,
  hyperplanes). Moreover, 
the study of hyperplanes in abstract rigidity matroids leads us to
state (and support with significant evidence) a conjecture about
characterizing the class of abstract rigidity matroids by means of 
certain ``prescribed
  substructures''. We then prove a recursive version of this conjecture.
\footnote{This is an extended version of the second author's bachelor
  thesis at University of Bremen. The first author acknowledges partial support from
  SNSF-Professorship grant PP00P2\_150552/1. The second author acknowledges
  partial support from Swiss European Mobility Programme at University of Fribourg.}
\end{abstract}

\section*{Introduction}

Rigidity matroids are combinatorial structures introduced by Graver in \cite{graver} which, roughly
speaking, model some aspects of the theory of rigid bar-link
framework, whose combinatorial study goes back to work of Laman
\cite{laman}. 

More precisely, a {\em framework} is a (finite) graph $(V,E)$ together with a
straight-line embedding into $\mathbb R^d$, and it is called {\em
  rigid} if the only continuous motions of the embedding of the
vertices which fixes the distance of adjacent vertices are composites
of rotations and translations.
Testing rigidity of a framework involves thus checking for nontrivial
solutions of a system of linear 
equations, every 
equation corresponding to an edge of the graph. Therefore, the rows of this
matrix define a matroid on the ground set $E$. This matroid depends of
course on the chosen embedding, but it always satisfies some abstract
properties (usually defined in terms of the closure operator, see
Definition \ref{DefiRigMat}). An \textit{abstract rigidity matroid}
is any matroid defined on the set of edges of a graph which satisfies
these additional properties. For example, the `usual' cycle
matroid of the given graph is in fact an abstract rigidity matroid,
but abstract rigidity matroids are a much bigger class.  For a comprehensive introduction to the combinatorial
study of rigidity of frameworks we point to the book of Graver,
Servatius and Servatius \cite{combrig}.

Here, we approach the subject
from the point of view of pure matroid theory, which we
briefly introduce in Section 1, and focus on two
main subjects.

First, the fact that classical matroid theory features a web of equivalent
definitions -- so-called {\em cryptomorphic} approaches, see Remark
\ref{rm:crypto} -- has been recognized as one of its main theoretical
strength since at least the seminal work of Crapo and Rota
\cite{CraRo}. In Section 2, we develop the theory of abstract rigidity
matroids in a corresponding way and derive different, equivalent
characterizations, each reflecting one of the classical approaches to
matroid theory.

Second, we focus on structural aspects and ask whether being an
abstract rigidity matroid can be identified as a ``structural''
matroid property, e.g., whether the class of abstract rigidity
matroids can be characterized through some ``prescribed''
substructures. This is a quite standard organizational process in
graph theory and matroid theory: in particular, prominent open
questions and important results about characterizing certain
classes of matroids through ``excluded minors'' abound. In Section 3,
we prove a characterization of abstract rigidity matroids in terms of ``prescribed
classes of hyperplanes'' in all restrictions (Theorem \ref{theo:2dim})
and  conjecture a nonrecursive characterization of abstract rigidity matroids via the
requirement of the existence of a prescribed class of hyperplanes. We
offer some stringent evidence towards this conjecture (e.g.,
Propositions \ref{Hsubset} and \ref{HC1}) as well as in support of the
general significance of the suggested substructures (e.g.\ through
Proposition \ref{connect}).



 \section{Review}\label{sec:rev}
 
In this section we will review some basic
definitions and results about abstract rigidity matroids. In
particular, we will state Viet-Hang Nguyens's combinatorial characterization
of abstract rigidity matroids (Proposition \ref{Prop6}). We will
assume the reader familiar with the basics of matroid theory, and
suggest Oxley's textbook \cite{oxley} for an introduction to the
subject.


\subsection{Graphs}\label{ss:graphs}
We consider a graph to be a pair $(V,E)$ consisting of a finite set
$V$ and a set $E$ of two-element subsets of $V$ (thus, our graphs will
not have loops nor parallel edges). Given two vertices $u,v\in V$ we
will often use $uv$ as a shorthand for $\{u,v\}$. 
For any finite set $W$ we will let $K(W):=\{uv \mid u,v\in W , u\neq v
\}$ so that $(W,K(W))$ is the complete graph on the vertex set
$W$. Given a natural number $n$ we will use the notation $K_n$ to
refer to any (and thus every) $K(W)$ with $\vert W\vert = n$.  
 For any $E\subseteq K(V)$ let $V(E):=\{u\in V \mid uw\in E\text{ for some }w\in V\}$; 
 $V(E)$ is called the \textit{support} of the edge set $E$. 

In the following, we will often consider graphs on a fixed vertex set and
will then, if no confusion can occur, refer to edge-sets as
``graphs''.
 \begin{itemize}
  \item For $v\in V$ let $$\vstar(v):=\{vu \in K(V)\mid u\neq v \} 
\}.$$
 Sets of the form $\vstar(v)$ with $m-1$ arbitrary edges deleted are simply called ``vertex stars minus $m-1$ edges''.
  \item For a set $V'\subseteq V$ such that $|V'|=m$
    let $$\bigs(V'):=K(V)\setminus K(V\setminus V').$$
   Thus, the set $\bigs(V')$ is the edge-set of the graph where every vertex
   $v_0$ not in $V'$ is attached to $K(V')$ by the family of edges
   $\{v_0v' \mid v'\in V'\}$ (one edge to every vertex in $V'$).
 \end{itemize}

 
\begin{figure}[h]
\begin{center}
\begin{tikzpicture}[line cap=round,line join=round,>=triangle 45,x=0.7cm,y=0.7cm]
\clip(-5,-2) rectangle (14,5);
\draw  (-3.5,0)-- (-0.5,0);
\draw  (-3.5,0)-- (-4.5,2);
\draw  (-3.5,0)-- (-3.5,2.5);
\draw  (-3.5,0)-- (-2,3.5);
\draw  (-3.5,0)-- (-0.5,2.5);
\draw  (-3.5,0)-- (0.5,2);
\draw  (0.5,2)-- (-0.5,0);
\draw  (-0.5,0)-- (-0.5,2.5);
\draw  (-2,3.5)-- (-0.5,0);
\draw  (-0.5,0)-- (-3.5,2.5);
\draw  (-4.5,2)-- (-0.5,0);
\draw  (8.5,2)-- (10,3.5)-- (11.5,2)-- (10,0.5);
\draw  (8.5,2)-- (11.5,2)-- (10,0.5)-- (10,3.5);
\draw  (8.5,2)-- (10,0.5)-- (8,0);
\draw  (8,0)-- (8.5,2);
\draw  (8.5,2)-- (8,4);
\draw  (8,4)-- (10,3.5);
\draw  (8,4)-- (11.5,2);
\draw  (8,4)-- (10,0.5);
\draw  (8,0)-- (10,3.5);
\draw  (8,0)-- (11.5,2);
\draw  (8.5,2)-- (12,4);
\draw  (12,4)-- (10,0.5);
\draw  (10,3.5)-- (12,4);
\draw  (12,4)-- (11.5,2);
\draw  (12,0)-- (11.5,2);
\draw  (10,3.5)-- (12,0);
\draw  (12,0)-- (8.5,2);
\draw  (10,0.5)-- (12,0);
\draw  (2,0)-- (6,0);
\draw  (6,0)-- (4,3.46);
\draw  (4,3.46)-- (2,0);
\draw  (3,1.73)-- (4,0);
\draw  (4,0)-- (5,1.73);
\draw  (5,1.73)-- (3,1.73);
\draw  (4,3.46)-- (4,0);
\draw  (2,0)-- (5,1.73);
\draw  (6,0)-- (3,1.73);
\draw (-3.7,-0.22) node[anchor=north west] {$|V| =7 , m =2$};
\draw (2.5,-0.25) node[anchor=north west] {$|V|=6 , m=3$};
\draw (8.5,-0.18) node[anchor=north west] {$|V|=8 , m=4$};
\begin{scriptsize}
\fill [color=black] (-3.5,0) circle (2.0pt);
\fill [color=black] (-0.5,0) circle (2.0pt);
\fill [color=black] (-4.5,2) circle (2.0pt);
\fill [color=black] (-3.5,2.5) circle (2.0pt);
\fill [color=black] (-0.5,2.5) circle (2.0pt);
\fill [color=black] (0.5,2) circle (2.0pt);
\fill [color=black] (-2,3.5) circle (2.0pt);
\fill [color=black] (8.5,2) circle (2.0pt);
\fill [color=black] (10,3.5) circle (2.0pt);
\fill [color=black] (11.5,2) circle (2.0pt);
\fill [color=black] (10,0.5) circle (2.0pt);
\fill [color=black] (12,4) circle (2.0pt);
\fill [color=black] (8,4) circle (2.0pt);
\fill [color=black] (12,0) circle (2.0pt);
\fill [color=black] (8,0) circle (2.0pt);
\fill [color=black] (2,0) circle (2.0pt);
\fill [color=black] (6,0) circle (2.0pt);
\fill [color=black] (4,3.46) circle (2.0pt);
\fill [color=black] (3,1.73) circle (2.0pt);
\fill [color=black] (5,1.73) circle (2.0pt);
\fill [color=black] (4,0) circle (2.0pt);
\end{scriptsize}
\end{tikzpicture}
\caption{Examples of some sets $\bigs(V')$.}
\end{center}
\end{figure}


\subsection{Matroids} 

Matroid theory finds its origins in the attempt, by Hassler Whitney,
to define combinatorial structures abstracting some properties of
linear independency in vectorspaces. For instance, it is an easy check
that the set of bases of a vectorspace (say, over a finite field)
satisfies Definition \ref{df:bases} below.

Our goal in this introductory paragraph is to define matroids and some
of the related terminology, and to explain what
matroid theorists mean by {\em cryptomorphism} (see Remark
\ref{rm:crypto}). Indeed the word may sound unusual, but the concept is one that is both useful
in applications and  -- most importantly for us here -- as a theoretical
feature which was singled out as one of the main aspects of interest of
matroid theory ever since at least Crapo and Rota's seminal treaty \cite{CraRo}.


\begin{Definition}\label{df:bases}
Let $S$ be a finite set and let $\B$ be a collection of subsets of $S$ which fulfills
\begin{itemize}
 \item[(i)] $\B \neq \emptyset$.  
 \item[(ii)] If $B_1,B_2\in\B$, then $|B_1|=|B_2|$.
 \item[(iii)] For all $B_1,B_2\in\B$, $x\in B_1\setminus B_2$, there exists $y\in B_2\setminus B_1$, such that \\ 
	      $(B_1\setminus\{x\})\cup \{y\}\in\B$.
\end{itemize}
Then the pair $\M=(S,\B)$ is called a \textit{matroid} and $\B$ is called the collection of \textit{bases} of $\M$.
A set $I\subseteq S$ is called \textit{independent} if there is a
basis $B\in\B$ such that $I\subseteq B$, otherwise
\textit{dependent}. An inclusion-minimal dependent subset of $S$ is
called a {\em circuit} of $\M$. A maximal set which does not contain a
basis is called a \textit{hyperplane}.

The \textit{closure} of a set $A\subseteq S$, denoted by $\clb{A}$, is the intersection of
all hyperplanes containing $A$ (if no such hyperplane exist, the
closure is defined as $A$).
\end{Definition}

A direct consequence of the basis axioms is that the sets of complements of bases
does also fulfill these axioms; the matroid $\Md:=(S,\Bd)$ is called {\em
  dual} to $\M$, and $(\Md)^*=\M$. If a subset of $S$ is a circuit (or a hyperplane, or
a basis etc) of $\Md$ one says that it is a {\em cocircuit} (resp.\
cohyperplane, cobasis) of $\M$. 



We will have use for the following two standard facts, whose proof can be found e.g.\ in \cite{oxley}.

\begin{Lemma} \label{cocircprop}
Let $\M$ be a matroid on a finite set $S$.
\begin{itemize}
 \item[(i)]  If $C$ is a circuit and $C'$ is cocircuit, then $|C\cap C'|\neq 1 $.
 \item[(ii)] $H$ is a hyperplane if and only if $S\setminus H$ is a
   cocircuit.
  \item[(iii)] $F\subseteq S$ is closed if and only if, for every
    circuit $C$, $\vert C\setminus F \vert \leq 1$ implies $C\subseteq
    F$.
\end{itemize}
\end{Lemma}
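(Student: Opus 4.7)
My strategy is to establish (ii) first, then (iii), and derive (i) from those two.

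For (ii), I plan to use the bases--cobases duality between $\M$ and $\Md$. The key observation is that $H$ contains a basis of $\M$ iff $S\setminus H$ is contained in the complement of some basis of $\M$, i.e., in some basis of $\Md$, iff $S\setminus H$ is independent in $\Md$ (equivalently, contains no cocircuit of $\M$). Taking negations, $H$ contains no basis of $\M$ iff $S\setminus H$ contains a cocircuit. Passing from ``maximal such $H$'' to ``minimal such $S\setminus H$'' then gives exactly the cocircuits.

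For (iii), I plan to invoke the standard circuit characterization of closure: for $e\notin A$, $e\in\clb{A}$ iff there exists a circuit $C$ with $e\in C\subseteq A\cup\{e\}$. Under this, the forward direction is immediate: if $F$ is closed and $C$ is a circuit with $C\setminus F=\{e\}$, then $C-e\subseteq F$ forces $e\in\clb{F}=F$, contradicting $e\notin F$; hence the case $|C\setminus F|=1$ cannot occur. The reverse direction is the contrapositive: if $F\neq\clb{F}$, pick any $e\in\clb{F}\setminus F$, and the circuit $C$ supplied by the characterization witnesses $|C\setminus F|=1$ with $C\not\subseteq F$.

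For (i), suppose for contradiction that $C\cap C'=\{e\}$ and set $F:=S\setminus C'$. By (ii), $F$ is a hyperplane; and every hyperplane is closed, since $F$ itself appears in the intersection defining $\clb{F}$, so $\clb{F}\subseteq F\subseteq\clb{F}$. Then $C\setminus F = C\cap C' = \{e\}$, so $|C\setminus F|\leq 1$ and (iii) forces $C\subseteq F$, contradicting $e\in C'$.

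The only conceptual ingredients are the bases--cobases duality underpinning (ii) and the circuit characterization of closure underpinning (iii); both are standard (cf.\ \cite{oxley}), and I expect no real obstacle beyond their careful application.
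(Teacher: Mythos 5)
The paper states this lemma as a standard fact and cites Oxley's textbook without giving any proof of its own, so there is no in-paper argument to compare against. Your proof is correct: (ii) via bases--cobases duality, (iii) via the circuit characterization of closure, and (i) as a corollary of the two, using that a hyperplane is closed (immediate from the paper's Definition~\ref{df:bases} of $\sigma_\B$ as an intersection of hyperplanes). The one thing you are implicitly relying on is that the hyperplane-intersection closure in Definition~\ref{df:bases} agrees with the rank-theoretic closure used in the ``circuit characterization''; that equivalence is itself standard, but since the paper's definition of closure is via hyperplanes, it would be cleaner either to cite that equivalence explicitly or to state the circuit characterization in the form you use and flag where it comes from. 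No circularity arises: the circuit characterization is usually derived from the rank/independence axioms without invoking (i). Overall this is a complete, textbook-level proof of a lemma the paper treats as black-box background.
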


 
An easy check shows that any function of the form $\clb{\cdot}$
satisfies the following definition.

 \begin{Definition}\label{df:closure}
  Let $S$ be a finite set and let $\sigma: 2^S\to 2^S$ be a function such that for all $A,B\subseteq S$:
  \begin{itemize}
   \item[(i)] $A\subseteq \cl{A}$.
   \item[(ii)] $A\subset B$ implies $\cl{A}\subseteq\cl{B}$.
   \item[(iii)] $\cl{\cl{A}} = \cl{A}$.
   \item[(iv)] If $x,y\in S$ and $x\in\cl{A\cup \{y\}}$, then $y\in\cl{A\cup \{x\}}$.
  \end{itemize}
Then $\cl{\cdot}$ is called a {\em matroid closure operator}, and any
set $A\subseteq S$ is called \textit{closed} (or {\em flat}) if $\cl{A}=A$.
 \end{Definition}

The next theorem is basic and can be found e.g.\ in \cite[Chapter 1]{oxley}

 \begin{Theorem}\label{thm:bc}
The function $\B \mapsto  \clb{\cdot}$ is a bijection between the set
of families $\B$ satisfying
Definition \ref{df:bases} and the set of functions $\cl{\cdot}$
satisfying Definition \ref{df:closure}.
 \end{Theorem}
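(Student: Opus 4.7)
The plan is to prove the bijection by constructing both directions explicitly and then verifying that they are mutually inverse. For the forward direction --- the well-definedness claim already hinted at in the sentence preceding the theorem --- given a basis family $\B$, one must check that the function $\clb{\cdot}$ satisfies axioms (i)--(iv) of Definition \ref{df:closure}. Axioms (i)--(iii) are formal consequences of the defining property that $\clb{A}$ is an intersection of hyperplanes: $A$ lies in every hyperplane containing $A$; enlarging $A$ can only restrict the relevant family of hyperplanes; and the intersection itself lies in each of those hyperplanes, giving idempotency. The substantive content is exchange axiom (iv), which I would derive using Lemma \ref{cocircprop}: if $x\in\clb{A\cup\{y\}}\setminus\clb{A}$ there is a hyperplane $H$ containing $A$ but not $x$, and its cocircuit complement must, by Lemma \ref{cocircprop}(i), meet any circuit in at least two points --- this forces $y\in\clb{A\cup\{x\}}$ via a standard circuit-elimination argument.

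For the inverse assignment, given a closure operator $\cl{\cdot}$ I would declare a set $I\subseteq S$ to be \emph{independent} when $x\notin\cl{I\setminus\{x\}}$ for every $x\in I$, and take $\B$ to be the collection of inclusion-maximal independent subsets of $S$. Nonemptyness is clear, and both the equicardinality and basis-exchange axioms reduce to property (iv) of $\cl{\cdot}$ via the classical augmentation argument: if $B_1,B_2\in\B$ and $x\in B_1\setminus B_2$, then from $x\in\cl{B_2}$ one extracts, by iterated use of (iv), some $y\in B_2\setminus B_1$ for which $(B_1\setminus\{x\})\cup\{y\}$ is again a maximal independent set.

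Finally, I would check that the two assignments are mutually inverse. Starting from $\B$, the subsets declared independent via $\clb{\cdot}$ coincide with subsets of some basis in $\B$, so the inclusion-maximal independent sets recover $\B$. Starting from $\cl{\cdot}$, the hyperplanes produced by the $\B$-construction are precisely the coatoms of the lattice of $\cl{\cdot}$-closed sets, whence $\clb{A}=\cl{A}$ for every $A\subseteq S$, since both equal the intersection of the hyperplanes containing $A$. The main technical hurdle in both directions is extracting information from the exchange axiom; the remaining verifications are routine and are carried out in full detail in \cite[Chapter 1]{oxley}.
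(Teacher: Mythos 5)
The paper gives no proof of this theorem; it simply defers to \cite[Chapter 1]{oxley}, and your argument reproduces the standard textbook construction found there, so the two ``approaches'' coincide. The one point worth flagging is that Definition \ref{df:closure}(iv) as printed omits the hypothesis $x\notin\cl{A}$, without which the exchange axiom fails for essentially every closure operator (and the theorem as stated would then be false); you correctly reinstate this hypothesis when you verify (iv) via the hyperplane/cocircuit characterization, and the paper's definition should be read with that extra clause.
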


\begin{remark}[Cryptomorphisms]\label{rm:crypto} In the parlance of matroid theory
  Theorem \ref{thm:bc} is referred to as a {\em cryptomorphism}
  between the definition of matroids in terms of bases and the
  definition in term of closure operator. In fact, just like in
  Definition \ref{df:closure}, one can isolate some distinguishing
  properties of the family of circuits (or of cocircuits, or
  hyperplanes, etc.) of a matroid and prove that any family of sets
  satisfying those formal properties can be obtained as the set of
  circuits (or... etc.) of a matroid defined e.g.\ as in Definition \ref{df:bases}. 
\end{remark}

We close this short presentation of matroids by defining two more
concepts we will have use for later. As a reference we point, again,
to \cite{oxley}.

\begin{Definition}\label{df:restr} Let $\M$ be a matroid on the set $S$ and consider
  $T\subseteq S$. The {\em restriction} of $\M$ to $T$, written
  $\M[T]$, is the matroid with ground set $T$ and closure operator
  defined for each $X\subseteq T$ as $\cl{X}:=\sigma_{\M}(X)\cap T$.
\end{Definition}

\begin{Definition}
  Let $\M=(S,\B)$ be a matroid. The {\em rank} of
  any $X\subseteq S$ is $$\rk(X):=\max\vert \{B\cap X \mid B\in \B
  \}\vert,$$
  i.e., the size of the biggest independent set contained in $X$.
\end{Definition}

\begin{Lemma}
  Let $\M=(S,\sigma)$ be a matroid, and $T\subseteq S$ be a closed
  set of $\M$. Consider any unrefinable chain $T=T_0\subsetneq
  T_1\subsetneq \ldots \subsetneq Tj=S$ of closed sets. Then 
  $$\rk(S)-\rk(T) = j.$$
\end{Lemma}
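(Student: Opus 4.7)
The plan is to use the standard fact that in any matroid the flats (closed sets) form a graded lattice, where rank provides the grading. The statement to be proved is then essentially the Jordan--H\"older / gradedness property for this lattice, restricted to the interval from $T$ to $S$.

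First I would observe two ingredients that follow directly from Definitions \ref{df:bases} and \ref{df:closure} (both found in the reference \cite{oxley}): (a) for any set $X\subseteq S$ and any $x\in S$ one has $\rk(X\cup\{x\})\in\{\rk(X),\rk(X)+1\}$; (b) for a closed set $F$ and any $x\notin F$, the closure $\sigma(F\cup\{x\})$ is a closed set strictly containing $F$, and moreover $\rk(\sigma(F\cup\{x\}))=\rk(F)+1$. Property (b) follows because any basis of $F$ extended by $x$ is independent (otherwise $x\in\sigma(F)=F$), and the closure does not change the rank.

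Next I would argue that every link $T_i\subsetneq T_{i+1}$ of an unrefinable chain of closed sets is a \emph{covering} relation in the sense that $\rk(T_{i+1})=\rk(T_i)+1$. Indeed, pick any $x\in T_{i+1}\setminus T_i$ and set $F:=\sigma(T_i\cup\{x\})$. By (b), $F$ is a closed set with $T_i\subsetneq F$ and $\rk(F)=\rk(T_i)+1$. Since $F\subseteq\sigma(T_{i+1})=T_{i+1}$, unrefinability of the chain forces $F=T_{i+1}$, and hence $\rk(T_{i+1})=\rk(T_i)+1$.

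Finally I would telescope:
\[
\rk(S)-\rk(T)=\sum_{i=0}^{j-1}\bigl(\rk(T_{i+1})-\rk(T_i)\bigr)=\sum_{i=0}^{j-1}1=j.
\]
I do not expect a real obstacle here; the only subtle point is step (b), that adjoining an element outside a closed set and then closing increases the rank by exactly one, which is where the closure axioms (in particular the exchange property (iv) of Definition \ref{df:closure}) are used.
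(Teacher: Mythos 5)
Your proof is correct. Note that the paper states this Lemma without proof, as a standard background fact (the preceding sentence directs the reader to Oxley), so there is no in-paper argument to compare against; your argument is precisely the textbook proof that the lattice of flats of a matroid is graded by rank. The essential steps are all sound: for a flat $F$ and $x\notin F$, any basis of $F$ extended by $x$ stays independent (else $x\in\sigma(F)=F$), so $\rk(\sigma(F\cup\{x\}))=\rk(F)+1$; and since $\sigma(T_i\cup\{x\})$ is a flat squeezed strictly between $T_i$ and (weakly) $T_{i+1}$, unrefinability forces equality, giving a unit rank jump at each link, and the sum telescopes. One cosmetic remark: your ingredient (a) is never actually used in the final argument (the maximality of $B\cup\{x\}$ inside $F\cup\{x\}$ already pins down the rank exactly), and the appeal to the exchange axiom (iv) is not where the work happens in step (b) — (iv) is what makes the rank function well-defined globally, but the local computation in (b) only needs that $\sigma$ is a closure with $\sigma(F)=F$ and that closures preserve rank. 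Neither point affects correctness.
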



 
\subsection{Abstract rigidity matroids}

We now are ready to introduce the main character of this
note. As these structures are less classical than graphs of matroids,
we will go into some more detail. Notice that the ground set of an abstract rigidity matroid is the set of edges of
a complete graph and, although perhaps not evident from
our abstract point of view, `rigidity' of a set of edges is meant to
be related to (and
indeed comes from) the concept of rigidity of a bar-and-joints
framework in $m$-space. The reader will perhaps find a useful
intuition in thinking of taking the closure of a certain set of elements
(i.e., edges) of
an abstract rigidity matroid as of increasing the given set by all edges
whose presence would not change the degree of rigidity of the given set.

 \begin{Definition}\label{DefiRigMat}  
 Let $V$ be a finite set and let $\A=(K(V),\sigma)$ be a matroid on $K(V)$ with closure operator $\sigma$.
 A set $E\subseteq K(V)$ is then called \textbf{rigid} (with respect to $\A$) if $\cl{E}=K(V(E))$.
 Let $m\in \mathbb{N}_{>0}$; the matroid $\A$ is called a \textbf{m-dimensional abstract rigidity matroid} if
 \begin{itemize}
  \item[C1.] if $E,F\subseteq K(V)$ and $|V(E)\cap V(F)|<m$,\\ then $\cl{E\cup F}\subseteq K(V(E))\cup K(V(F))$
  \item[C2.] if $E,F\subseteq K(V)$ are rigid and $|V(E)\cap V(F)|\geq m$, then $E\cup F$ is rigid.
 \end{itemize}
\end{Definition}
 Roughly speaking condition (C1) says that edge-sets which do not share enough common vertices cannot unite to a rigid set,
 while (C2) says that rigid sets which are connected through enough vertices form a rigid union. 
 Note that (C1) also states that $\cl{E}\subseteq K(V(E))$ for all $E\subseteq K(V)$.\\
 
 \begin{remark}\label{rem:vsm}
   Recall the notation of Section \ref{ss:graphs} and notice that, if
   $\A$ is an $m$-dimensional rigidity matroid on $K(V)$, every
   $K(V')$ with $\vert V' \vert = m+2$ is a circuit and every vertex
   star minus $m-1$ edges is a cocircuit of $\A$.
 \end{remark}

 \begin{remark}
   For the sake of simplicity, if not otherwise specified, throughout the text we will
   consider $m$-dimensional rigidity matroids on $K(V)$ where
   $m\in\N_{>0}$ and $|V|\geq m+1$ be fixed.   Indeed one easily
   checks that every abstract rigidity matroid on $K(V)$ with
   $|V|\leq m$ is trivial in the sense that every edge-set would be
   an independent set in such a matroid. 
 \end{remark}

 \begin{Definition}
   An edge-set is said to fulfill \textit{Laman's condition} in
   dimension $m$, if for all $F\subseteq E$ with $|V(F)|\geq m$, we
   have $|F|\leq m|V(F)|-{m+1\choose 2}$.
 \end{Definition}

 In fact every independent set in a $m$-dimensional abstract rigidity matroid fulfills Laman's condition.
 \begin{Lemma} \cite[Lemma 2.5.6.]{combrig}\label{lem:rank}
  Let  $U\subseteq V$ and let $\A$ be a $m$-dimensional abstract rigidity matroid on $K(V)$.
  Then \[
  r(K(U))=\left\{
 \begin{array}{cc}
 {|U|\choose 2} 	& \text{if }|U|\leq m+1\\
  m|U|-{m+1\choose 2}	& \text{if }|U|\geq m+1 
 \end{array}  \right.
 \]
\end{Lemma}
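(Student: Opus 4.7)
My plan is to split the statement into two (overlapping at $|U|=m+1$) cases and in both to exploit the cocircuit description from Remark \ref{rem:vsm}.

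For $|U|\leq m+1$ I would show directly that $K(U)$ is independent, so that $r(K(U))=|K(U)|={|U|\choose 2}$. Suppose for contradiction that $K(U)$ contains a circuit $C$, and fix any $v\in V(C)$. The number $k$ of edges of $C$ incident to $v$ satisfies $1\leq k\leq |V(C)|-1\leq m$. Since every set of the form $\vstar(v)\setminus D$ with $|D|=m-1$ is a cocircuit, I can take $D$ to consist of $k-1$ of those $k$ edges together with $m-k$ further edges of $\vstar(v)$; the underlying arithmetic reduces to $|V|\geq m+1$, which is our standing hypothesis. The resulting cocircuit meets $C$ in exactly one edge, contradicting Lemma \ref{cocircprop}(i).

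For $|U|\geq m+1$ I would induct on $|U|$, the base case $|U|=m+1$ being handled above (the two formulas agreeing there). Write $U=U'\cup\{v\}$ with $|U'|=n\geq m+1$ and fix $u_1,\ldots,u_m\in U'$. C1 applied with $F=\emptyset$ gives $\cl{K(W)}\subseteq K(V(K(W)))=K(W)$ for any $W$ with $|W|\geq 2$, so every such $K(W)$ is rigid. In particular $K(U')$ and $K(\{v,u_1,\ldots,u_m\})$ are rigid and share exactly $m$ common vertices, so C2 forces their union $K(U')\cup\{vu_1,\ldots,vu_m\}$ to be rigid with support $U$; hence $\cl{K(U')\cup\{vu_1,\ldots,vu_m\}}=K(U)$, and the standard inequality $r(A\cup B')\leq r(A)+|B'\setminus A|$ yields $r(K(U))\leq r(K(U'))+m$.

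For the matching lower bound I would take any basis $B$ of $K(U')$ and show that $B\cup\{vu_1,\ldots,vu_m\}$ is independent. Otherwise it would contain a circuit $C$, and since no edge of $B\subseteq K(U')$ is incident to $v$, we have $C\cap\vstar(v)\subseteq\{vu_1,\ldots,vu_m\}$ of some size $k\in\{1,\ldots,m\}$; the same construction as in the first case then furnishes a cocircuit meeting $C$ in exactly one edge, again contradicting Lemma \ref{cocircprop}(i). Combined with the upper bound and the inductive hypothesis this gives $r(K(U))=r(K(U'))+m=m|U|-{m+1\choose 2}$, closing the induction. The only delicate point, shared by both applications of the cocircuit trick, is the precise choice of the deletion set $D$ with prescribed intersection with $C$; this however reduces to a counting that needs only $|V|\geq m+1$.
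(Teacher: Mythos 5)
The paper cites this lemma from \cite{combrig} without giving its own proof, so there is no in-paper argument to compare against; I will review your proof on its own terms. Your argument is correct and self-contained given the apparatus the paper does provide (Remark \ref{rem:vsm} and Lemma \ref{cocircprop}). For $|U|\leq m+1$ you show $K(U)$ is independent by, for any purported circuit $C$, picking $v\in V(C)$ with $\deg_C(v)=k\leq m$ and deleting from $\vstar(v)$ exactly $k-1$ of the $k$ edges of $C$ at $v$ together with $m-k$ edges of $\vstar(v)\setminus C$; the resulting cocircuit meets $C$ in a single edge, contradicting Lemma \ref{cocircprop}(i), and the count $|\vstar(v)\setminus C|=|V|-1-k\geq m-k$ needed to form $D$ is precisely $|V|\geq m+1$. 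For $|U|\geq m+1$ you induct one vertex at a time: $r(K(U))\leq r(K(U'))+m$ follows because the $m$-valent-$0$-extension $K(U')\cup\{vu_1,\ldots,vu_m\}$ is rigid (via (C1) and (C2)) and hence spans $K(U)$, and the matching lower bound follows by applying the same cocircuit device to a circuit inside $B\cup\{vu_1,\ldots,vu_m\}$ for $B$ a basis of $K(U')$. The base case $|U|=m+1$ is covered by the first part and the two formulas agree there, so the induction closes.

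Two remarks. First, the cocircuit device you use in both places is exactly a rederivation of Lemma \ref{extensionlemma}(ii) (an $m$-valent-$0$-extension of an independent set is independent), and your rigidity step is Lemma \ref{extensionlemma}(iii); citing that lemma directly would shorten the write-up. Second, you rely on Remark \ref{rem:vsm} for the fact that vertex stars minus $m-1$ edges are cocircuits; in the standard development (as in \cite{combrig}) that fact is established prior to, and independently of, the rank formula, so there is no circularity, but it is worth stating explicitly that you are treating it as a primitive input.
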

 Therefore every $m$-dimensional abstract rigidity matroid on $K(V)$ is of rank $m|V|-{m+1 \choose 2}$,
 and rigid edge-sets on at least $m+1$ vertices have at least $m|V|-{m+1 \choose 2}$ edges. 
 
 \begin{Definition}
   An edge-set which is both rigid and independent is called
   \textit{isostatic}.
 \end{Definition}

 A result in rigidity theory states that $2$-isostatic sets are at
 least $2$-vertex connected \cite[Exercise 4.7.]{combrig}. Our Proposition
 \ref{connect} will prove that in fact every rigid set in a
 $m$-dimensional rigidity matroid is $m$-vertex connected. 
 
Note that every rigid edge-set must have an isostatic subset, so rigidity of a graph can be seen as related to connectivity.  This is emphasized by the following result:
 
\begin{Lemma}[Theorem 3.11.8.\ of \cite{combrig}]
 There is only one $1$-dimensional abstract rigidity matroid on $K(V)$, the cycle matroid on $K(V)$.
 \end{Lemma}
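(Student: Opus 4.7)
The plan is to identify the rigid sets of any $1$-dimensional abstract rigidity matroid $\A$ on $K(V)$ in purely graph-theoretic terms, use this to pin down the closure operator $\sigma$, and then invoke Theorem~\ref{thm:bc} to conclude. The core claim is: $E\subseteq K(V)$ is rigid in $\A$ if and only if the graph $(V(E),E)$ is connected.

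For the ``if'' direction, I would proceed by induction on $|E|$. A single edge $\{uv\}$ is rigid, since (C1) applied with $F=\emptyset$ forces $\sigma(\{uv\})\subseteq K(\{u,v\})=\{uv\}$. For the inductive step, pick a spanning tree $T$ of $(V(E),E)$ and order its edges leaf-by-leaf so that each prefix $T_i$ is a subtree. At each step, the previously constructed subtree $T_i$ and the newly added leaf edge $\{wv\}$ share exactly the attachment vertex $w$, so $|V(T_i)\cap V(\{wv\})|=1=m$; both sides are rigid (the single edge trivially, $T_i$ by induction), and (C2) yields rigidity of $T_{i+1}$. Thus $T$ is rigid, i.e.\ $\sigma(T)=K(V(E))$, and the inclusion $T\subseteq E$ combined with $\sigma(E)\subseteq K(V(E))$ from (C1) gives $\sigma(E)=K(V(E))$. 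For the converse, if $(V(E),E)$ has components $(V_i,E_i)$ with $k\geq 2$ and $e=uv$ crosses between $V_i$ and $V_j$ with $i\neq j$, then applying (C1) to $F_1=E_i$ and $F_2=\bigcup_{\ell\neq i}E_\ell$ (whose vertex supports are disjoint) yields $\sigma(E)\subseteq K(V_i)\cup K(V(E)\setminus V_i)$; since $e$ lies in neither piece, $e\notin\sigma(E)$ and $E$ fails to be rigid.

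From this characterization the closure operator is explicit: writing $(V_i,E_i)$ for the components of $(V(E),E)$, one has $\bigcup_i K(V_i)\subseteq\sigma(E)$ by monotonicity (since each $E_i$ is rigid and hence $\sigma(E_i)=K(V_i)$), and $\sigma(E)\subseteq\bigcup_i K(V_i)$ by the crossing-edges argument above. Hence $\sigma(E)=\bigcup_i K(V_i)$. This coincides with the closure of $E$ in the cycle matroid of $K(V)$, since isolated vertices in $V\setminus V(E)$ contribute no edges to either expression. Theorem~\ref{thm:bc} then forces $\A$ to agree with the cycle matroid.

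The main obstacle is the leaf-by-leaf induction in the ``connected implies rigid'' direction, where one must verify that the hypotheses of (C2) apply cleanly at each step and correctly exploit that for $m=1$ a single shared vertex suffices to trigger the axiom. Once rigid sets are pinned down, the closure computation and the identification with the cycle matroid follow immediately from the axioms.
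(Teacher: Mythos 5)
The paper itself gives no proof of this lemma --- it is cited as Theorem~3.11.8 of Graver, Servatius, and Servatius --- so there is no in-paper argument to compare against. Your proof is correct and self-contained. Reducing to the characterization that $E$ is rigid in a $1$-dimensional abstract rigidity matroid precisely when $(V(E),E)$ is connected is exactly the right bridge: with $m=1$, applying (C1) to the component decomposition forbids any cross-component edge from entering $\sigma(E)$, while (C2) with its overlap threshold at a single shared vertex lets you build up rigidity along a spanning tree one leaf at a time (each new leaf edge meets the growing subtree in exactly one vertex, so the hypothesis of (C2) is met). From here $\sigma(E)=\bigcup_i K(V_i)$ over the components of $(V(E),E)$, and you correctly observe that this coincides with the closure operator of the cycle matroid of $K(V)$ since singleton components of $(V,E)$ contribute no edges; the cryptomorphism between closure operators and matroids, Theorem~\ref{thm:bc}, then closes the argument. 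The only point worth stating more explicitly is that the converse inclusion $\sigma(E)\subseteq\bigcup_i K(V_i)$ follows by running the (C1) argument for each index $i$ in turn: any edge of $\sigma(E)$ must, for every $i$, have both endpoints in $V_i$ or both outside it, which forces both endpoints into a single $V_j$.
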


 However, for $m>1$, there can be many different $m$-dimensional
 abstract rigidity matroids on $K(V)$. This makes the theory
 interesting, and is investigated in more detail for example in
 \cite{combrig} and \cite{prism}.
 
Here we will only briefly recall some of the structural aspects of
abstract rigidity matroids in general dimension, especially as related
to the problem of characterizations alternative to Definition \ref{DefiRigMat}
(e.g., towards ``cryptomorphisms'' -- cf. Remark \ref{rm:crypto})

 \begin{Definition} 
 Let $E\subseteq K(V)$ be an edge-set, let $v_1,\ldots v_k \in V(E)$ and  $w\in V\setminus V(E)$.
 Then the set $E\cup\{v_1w,\ldots v_kw\}$ is called a \textbf{$k$-valent-$0$-extension} of $E$.
 \end{Definition}
Intuitively, we are here attaching a vertex to a given edgeset by
means of exacly $k$ new edges.
 
\begin{figure}[h]
\centering
\begin{tikzpicture}[line cap=round,line join=round,>=triangle 45,x=0.8cm,y=0.8cm]
\clip(0,-2) rectangle (9.5,2);
\draw (1,-1)-- (1,1);
\draw (1,-1)-- (2,-1);
\draw (2,-1)-- (3,0);
\draw (1,1)-- (3,0);
\draw [->] (3.96,0.06) -- (5.46,0.06);
\draw (6.35,0.99)-- (8.35,-0.01);
\draw (8.35,-0.01)-- (7.35,-1.01);
\draw (6.35,-1.01)-- (6.35,0.99);
\draw (7.35,-1.01)-- (6.35,-1.01);
\draw (8.35,0.99)-- (6.35,0.99);
\draw (8.35,0.99)-- (8.35,-0.01);

\draw[color=black] (1.27,1.35) node {$v_1$};
\draw[color=black] (3.4,0.24) node {$v_2$};
\draw[color=black] (3.15,1.35) node {$w$};
\draw[color=black] (8.75,0.24) node {$v_2$};
\draw[color=black] (6.61,1.35) node {$v_1$};
\draw[color=black] (8.5,1.35) node {$w$};

\begin{scriptsize}
\fill [color=black] (1,-1) circle (2pt);
\fill [color=black] (1,1) circle (2pt);
\fill [color=black] (3,0) circle (2pt);
\fill [color=black] (3,1) circle (2pt);
\fill [color=black] (2,-1) circle (2pt);
\fill [color=black] (8.35,-0.01) circle (2pt);
\fill [color=black] (7.35,-1.01) circle (2pt);
\fill [color=black] (6.35,-1.01) circle (2pt);
\fill [color=black] (6.35,0.99) circle (2pt);
\fill [color=black] (8.35,0.99) circle (2pt);
\end{scriptsize}
\end{tikzpicture}
\caption{An example of a $2$-valent-$0$-extension}
\end{figure}
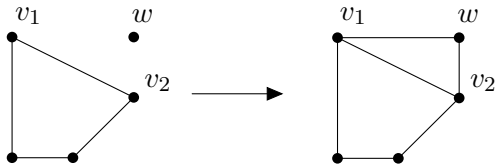

 
 
 \begin{Lemma}[Lemma 0.1 of \cite{km2co}]\label{extensionlemma} 
 Let $\A$ be a matroid on $K(V)$ such that all vertex stars minus $m-1$ edges are cocircuits. Then:
 \begin{itemize}
  \item[(i)]	 no circuit of $\A$ contains a vertex of valence less than $m+1$.
  \item[(ii)]	 for $k\leq m$, every $k$-valent-0-extension of an independent edge-set is independent.
  \item[(iii)]	 for $k\geq m$, every $k$-valent-0-extension of a rigid edge-set is rigid
		  if (C2) holds in $\A$ (with respect to $m$).
 \end{itemize}
\end{Lemma}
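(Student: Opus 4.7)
The plan is to prove (i), (ii), (iii) in that order, using (i) as the main tool for the later parts.

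For (i), I aim for a contradiction with Lemma \ref{cocircprop}(i). Let $C$ be a circuit containing a vertex $v$ of valence $\leq m$ in $C$, and let $C_v \subseteq C$ be the edges of $C$ incident to $v$, so $|C_v| \leq m$. Pick any $e \in C_v$ and extend $C_v \setminus \{e\}$ to a set $D \subseteq \vstar(v)$ of size exactly $m-1$ (possible since $|\vstar(v)|=|V|-1\geq m$ by the standing assumption $|V|\geq m+1$). Then $C' := \vstar(v) \setminus D$ is a cocircuit by hypothesis, and by construction $C \cap C' = \{e\}$, contradicting $|C \cap C'| \neq 1$.

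For (ii), let $E$ be independent and $E' := E \cup \{v_1w, \ldots, v_kw\}$ with $k \leq m$. If $E'$ contained a circuit $C$, then $C$ would have to include at least one of the new edges $v_iw$ (since $E$ is independent), so $w \in V(C)$. But the valence of $w$ in $E'$, and a fortiori in $C$, is at most $k \leq m$, contradicting (i).

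For (iii), I first reduce to the base case $k = m$: once that case is proved, the remaining edges $v_{m+1}w, \ldots, v_kw$ already lie in $K(V(E)\cup\{w\}) = \cl{E \cup \{v_1 w, \ldots, v_m w\}}$, so adjoining them leaves the closure unchanged. For $k = m$, the plan is to apply (C2) to $E$ and $K_{m+1} := K(\{v_1, \ldots, v_m, w\})$, which share exactly the $m$ vertices $v_1, \ldots, v_m$. Their union differs from $E' = E \cup \{v_1 w, \ldots, v_m w\}$ only by the edges $v_iv_j$ ($i<j\leq m$), which already lie in $K(V(E)) = \cl{E}$; hence $\cl{E \cup K_{m+1}} = \cl{E'}$, and rigidity of $E \cup K_{m+1}$ will yield $\cl{E'} = K(V(E) \cup \{w\}) = K(V(E'))$.

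The main obstacle is thus checking that $K_{m+1}$ itself is rigid---an assertion that would be immediate from (C1), which is not assumed here, and so must be extracted from (i) and (ii). Independence of $K_{m+1}$ follows by iterating (ii): start from the single edge $v_1 v_2$ and adjoin $v_3, v_4, \ldots, v_m, w$ one at a time, each via a $j$-valent-$0$-extension with $j \leq m$. For closedness I invoke (i): any edge $e \notin K_{m+1}$ has at least one endpoint $x$ outside $\{v_1, \ldots, v_m, w\}$, so $x$ has valence $1$ in $K_{m+1} \cup \{e\}$. Any circuit of $K_{m+1} \cup \{e\}$ containing $e$ would force $x$ to appear as a vertex of valence $\leq 1$, contradicting (i); hence no such circuit exists and $e \notin \cl{K_{m+1}}$. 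Combined with $V(K_{m+1}) = \{v_1, \ldots, v_m, w\}$, this yields $\cl{K_{m+1}} = K_{m+1} = K(V(K_{m+1}))$, so $K_{m+1}$ is rigid and (C2) completes the proof.
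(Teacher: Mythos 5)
Your proof is correct. Note, however, that the paper does not actually prove this lemma: it is imported as Lemma 0.1 from the cited reference \cite{km2co}, so there is no internal argument in this paper to compare against; I evaluate your attempt on its own terms. Part (i) is the standard circuit--cocircuit intersection argument, and you are right to be careful that the completion $D$ of $C_v\setminus\{e\}$ to an $(m-1)$-set inside $\vstar(v)$ can be chosen to avoid $e$ (this uses $|V|\geq m+1$, the paper's standing assumption, which is in any case forced so that vertex stars minus $m-1$ edges are nonempty). Part (ii) follows immediately. The delicate point is (iii), and you correctly identify it: one cannot invoke (C1) to conclude that $K(\{v_1,\ldots,v_m,w\})$ is rigid, since (C1) is not among the hypotheses. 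Your workaround --- showing $K_{m+1}$ is a closed set directly, by noting that any $e\notin K_{m+1}$ would force a vertex of valence $1$ into a putative circuit, contradicting (i) --- is clean and valid. (The independence of $K_{m+1}$, while true by iterating (ii), is not actually needed: rigidity is a statement purely about closure, and closedness of $K_{m+1}$ alone gives $\cl{K_{m+1}}=K_{m+1}=K(V(K_{m+1}))$.) The reduction from $k\geq m$ to $k=m$, and the observation that the edges $v_iv_j$ already lie in $\cl{E}=K(V(E))$ so that $\cl{E\cup K_{m+1}}=\cl{E'}$, are both correct.
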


\begin{Corollary}
  For any $V'\subseteq V$ with $|V'|=m$, $\bigs(V')$ is a basis in any abstract rigidity matroid of dimension $m$.
\end{Corollary}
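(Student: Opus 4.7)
My plan is to exhibit $\bigs(V')$ as an isostatic edge-set of the correct cardinality, and conclude that it must be a basis. Observe first that
\[
|\bigs(V')| = \binom{m}{2} + m(|V|-m) = m|V| - \binom{m+1}{2},
\]
which by Lemma \ref{lem:rank} equals the rank of any $m$-dimensional abstract rigidity matroid on $K(V)$. Hence it suffices to prove that $\bigs(V')$ is independent and rigid (i.e., isostatic), since any independent set whose closure is all of $K(V)$ is automatically a basis.

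First I would handle the base case $K(V')$. Since $|V'|=m$, Lemma \ref{lem:rank} gives $\rk(K(V'))=\binom{m}{2}=|K(V')|$, so $K(V')$ is independent. Rigidity is immediate from (C1) applied with $E=F=K(V')$: the closure is contained in $K(V(K(V')))=K(V')$, and the reverse inclusion follows from Definition \ref{df:closure}(i). Thus $K(V')$ is isostatic.

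Next I would construct $\bigs(V')$ incrementally. Enumerate $V\setminus V' = \{u_1, u_2, \ldots, u_r\}$ and set $E_0 := K(V')$, $E_i := E_{i-1} \cup \{u_i v' \mid v' \in V'\}$ for $i=1,\ldots,r$. Since $V' \subseteq V(E_{i-1})$ at every step and $u_i \notin V(E_{i-1})$, each $E_i$ is an $m$-valent-$0$-extension of $E_{i-1}$. By Remark \ref{rem:vsm} the hypothesis of Lemma \ref{extensionlemma} is met, so parts (ii) and (iii) of that lemma, applied with $k=m$, show that isostaticity is preserved at each step. By induction $E_r = \bigs(V')$ is isostatic.

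Since $\bigs(V')$ is independent of size $m|V|-\binom{m+1}{2}$, equal to the rank of the whole matroid, it is a basis. I do not anticipate a significant obstacle: the only care needed is in verifying that the inductive step really is an $m$-valent-$0$-extension (which uses $V'\subseteq V(E_{i-1})$ throughout) and that Lemma \ref{extensionlemma}(iii) applies, which requires exactly the axiom (C2) that is built into the definition of an abstract rigidity matroid.
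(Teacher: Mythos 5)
Your proof takes essentially the same route as the paper's: start from the isostatic seed $K(V')$, attach the vertices of $V\setminus V'$ one by one via $m$-valent-$0$-extensions, and invoke Lemma \ref{extensionlemma}(ii)--(iii) to propagate independence and rigidity. The opening cardinality computation is correct but redundant, since an isostatic set is by definition independent and spanning and hence automatically a basis without any counting. One small slip in the justification of the base case: you cannot apply (C1) with $E=F=K(V')$, because then $|V(E)\cap V(F)|=|V'|=m$, which is not strictly less than $m$. The inclusion $\cl{K(V')}\subseteq K(V(K(V')))$ follows instead by taking $F=\emptyset$ in (C1), which is precisely the observation recorded in the paper immediately after Definition \ref{DefiRigMat}.
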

\begin{proof}
  For any $V'\subseteq V$ with $|V'|=m$, $\bigs(V')$ is
   the set $K(V')$ with every vertex of $V\setminus V'$ attached
  to it via $m$-valent-$0$-extensions. Note that $K(V')$ is independent and
  rigid, because it is a complete graph on less than $m+2$
  vertices. Hence Lemma \ref{extensionlemma} implies that $\bigs(V')$ is a basis.
\end{proof}


The problem of finding alternative characterizations of abstract
rigidity matroids has been raised in the literature, and we
conclude this short review with the 2010 result of Nguyen which gave
an answer to this problem. This will be the starting point for
our considerations.

 \begin{Proposition}\label{Prop6} \cite[Theorem 2.2.]{nguyen} \cite[Theorem 0.2]{km2co}
  Let $m\in\mathbb{N}$ and let $V$ be a finite set with $|V| \geq m+1$.
  Furthermore, let $\A$ be a matroid on $K(V)$.

  Then, $\A$ is a $m$-dimensional abstract rigidity matroid if and only if any two of the following conditions hold:
  \begin{itemize}
   \item[(i)] all vertex stars minus $m-1$ edges are cocircuits of $\A$.
   \item[(ii)] all $K_{m+2}$ are circuits of $\A$.
   \item[(iii)] $r(K(V))=m|V|-{m+1\choose 2}$.
  \end{itemize}
\end{Proposition}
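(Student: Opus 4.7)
The forward direction is immediate: if $\A$ is an $m$-dimensional abstract rigidity matroid, then (i) and (ii) are recorded in Remark \ref{rem:vsm}, and (iii) follows from Lemma \ref{lem:rank} applied with $U = V$.

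For the converse, the strategy is to first show that any two of (i), (ii), (iii) imply the third, thereby reducing the problem to deriving (C1) and (C2) under all three assumptions. To illustrate, (i)+(ii) implies (iii) as follows: fix $V' \subseteq V$ with $|V'| = m$. By Lemma \ref{extensionlemma}(i) applied under (i), every vertex of $K(V')$ has valence $m-1 < m+1$, so $K(V')$ contains no circuits and is independent; by Lemma \ref{extensionlemma}(ii), the iterated $m$-valent-$0$-extensions building up $\bigs(V')$ preserve independence. Condition (ii) then forces $\bigs(V')$ to be maximal: any edge $e \notin \bigs(V')$ has both endpoints in $V \setminus V'$ and together with $V'$ spans a $K_{m+2}$, which is a circuit by (ii). Hence $\bigs(V')$ is a basis of cardinality $m|V| - \binom{m+1}{2}$. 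The other two pairwise implications follow by analogous rank-counting arguments.

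Assuming all three conditions, I would next derive (C1) and (C2). Note first that (i) alone already yields $\cl{E} \subseteq K(V(E))$ for every $E$: for $u \notin V(E)$, any cocircuit $\vstar(u) \setminus X$ with $|X| = m-1$ and $uv \notin X$ witnesses $uv \notin \cl{E}$. For (C2), given rigid $E, F$ with $|V(E) \cap V(F)| \geq m$, fix $v_1, \ldots, v_m \in V(E) \cap V(F)$; for each $w \in V(F) \setminus V(E)$, the edges $v_1 w, \ldots, v_m w$ lie in $\cl{F} = K(V(F))$, hence in $\cl{E \cup F}$. For any $x \in V(E) \setminus \{v_1, \ldots, v_m\}$, the vertices $\{v_1, \ldots, v_m, w, x\}$ span a $K_{m+2}$ which by (ii) is a circuit all of whose edges but $wx$ already lie in $\cl{E \cup F}$; the circuit property then forces $wx \in \cl{E \cup F}$. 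Iterating over $w \in V(F) \setminus V(E)$ gives $K(V(E \cup F)) \subseteq \cl{E \cup F}$, so (C2) holds.

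Condition (C1) is where I expect the principal difficulty. The cleanest route is to show $K(V(E)) \cup K(V(F))$ is itself a flat whenever $|V(E) \cap V(F)| < m$. Using (iii) and Lemma \ref{lem:rank} together with submodularity of rank gives
\[
r(K(V(E)) \cup K(V(F))) \leq m(|V(E)|+|V(F)|) - 2\binom{m+1}{2} - \binom{k}{2},
\]
where $k = |V(E) \cap V(F)| \leq m-1$; comparing with $r(K(V(E \cup F))) = m|V(E \cup F)| - \binom{m+1}{2}$ exhibits a positive gap of $\tfrac{(m-k)(m+1-k)}{2}$. The hard step is promoting this aggregate gap into the pointwise statement that no individual crossing edge (with one endpoint in $V(E) \setminus V(F)$ and the other in $V(F) \setminus V(E)$) lies in $\cl{K(V(E)) \cup K(V(F))}$. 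For this one must exhibit, for each such edge $uv$, an explicit hyperplane containing $E \cup F$ but missing $uv$; a single cocircuit from (i) does not generally suffice when $V(E)$ or $V(F)$ is large, and one must intersect several vertex-star hyperplanes, crucially using $k \leq m-1$ to ensure the deleted edges at each star cover the edges at the relevant vertex without blocking $uv$. This bookkeeping with overlapping cocircuits is the technical core of the proof.
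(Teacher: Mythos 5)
The paper does not actually prove this proposition; it is cited from Nguyen and from the second reference and used as a black box. So there is no in-paper proof to compare against, and I can only assess your proposal on its own terms.

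Your forward direction, your reduction to ``prove (C1), (C2) under all three conditions,'' the illustration (i)+(ii) implies (iii), and your derivation of (C2) via $K_{m+2}$-circuits are sound. But the proposal has two genuine gaps. The lesser one: the claim that the remaining pairwise implications ``follow by analogous rank-counting arguments'' is not justified. In particular (ii)+(iii) implies (i) asks you to certify that every vertex star minus $m-1$ edges is a \emph{cocircuit}, and this is not a basis-cardinality count the way (i)+(ii) implies (iii) was; you would first need to control $r(K(V'))$ for all $V'\subseteq V$, which (ii)+(iii) alone do not hand you, and then argue coindependence and minimality separately. These are workable but not analogous, and you have not done them.

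The serious gap is (C1), which you explicitly leave open. The submodularity bound only yields an aggregate positive difference $r(K(V(E\cup F))) - r\bigl(K(V(E))\cup K(V(F))\bigr)$, and you correctly note that this does not by itself exclude any particular crossing edge from $\cl{K(V(E))\cup K(V(F))}$; but you then stop at ``this bookkeeping with overlapping cocircuits is the technical core'' without carrying it out. Notice moreover that the pointwise assertion you are after --- that $K(V_1)\cup K(V_2)$ is a flat when $|V_1\cap V_2|\le m-1$ --- is exactly the content of Proposition \ref{Hsubset} (namely $\Hm\subseteq \Hy(\A)$), which the paper proves \emph{using} (C1) and (C2). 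So obtaining it as an ingredient of (C1) risks circularity unless you supply a direct argument from (i)--(iii). A direct route would be, for a fixed crossing edge $uv$, to use (i) to produce saturated flat chains as in Lemma \ref{lem:twoparts} and pin down the rank of the relevant flats so that $uv$ cannot be in the closure; as written, your proposal ends before this step, and without it (C1) --- and hence the converse direction --- is not established.
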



 \section{Axiomatizations of rigidity matroids}\label{axioms}
 

In this section we will derive several cryptomorphic definitions of
abstract rigidity matroids. Our starting point will be Proposition
\ref{Prop6}, which states conditions on the circuits,
coircuits and on the rank function. We strive for 
characterizations fitting into the classical reformulations
of matroid theory -- in particular: cocircuits, hyperplanes, bases, circuits.

We start by removing from Proposition \ref{Prop6} the reference to rank and circuits.
 
 \begin{Proposition} \ \\
  Let $\A$ be a matroid on $K(V)$.
  Then $\A$ is a $m$-dimensional abstract rigidity matroid if and only if
  \begin{itemize}
  \item[D1.] all cocircuits have strictly more than $|V|-m$ vertices and
  \item[D2.] all vertex stars minus $m-1$ edges are cocircuits.
  
  \end{itemize}

\end{Proposition}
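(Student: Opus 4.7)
The plan is to reduce both directions to Proposition \ref{Prop6}. Since D2 is literally condition (i) of that proposition, in each direction I only need to match one of the remaining conditions (ii) or (iii), and I will work with (iii) by exhibiting, for an arbitrary $V'\subseteq V$ with $|V'|=m$, the set $\bigs(V')$ as a basis of $\A$.

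For the forward implication, D2 comes for free from Proposition \ref{Prop6}(i). To establish D1, I would suppose for contradiction that some cocircuit $C^*$ satisfies $|V(C^*)|\leq |V|-m$, and set $H:=K(V)\setminus C^*$, a hyperplane by Lemma \ref{cocircprop}(ii). Choosing any $V'\subseteq V\setminus V(C^*)$ with $|V'|=m$, I would note that no edge of $C^*$ can touch $V'$, so $\vstar(v)\subseteq H$ for every $v\in V'$ and therefore $\bigs(V')=\bigcup_{v\in V'}\vstar(v)\subseteq H$. But the Corollary to Lemma \ref{extensionlemma} asserts that $\bigs(V')$ is a basis of $\A$, contradicting the defining property of a hyperplane.

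For the converse, assume D1 and D2. Using only D2, Lemma \ref{extensionlemma}(i)--(ii) implies that $K(V')$ is independent (its vertices all have valence $m-1<m+1$, so it contains no circuit) and that iteratively attaching each vertex of $V\setminus V'$ to $K(V')$ by an $m$-valent $0$-extension preserves independence. This yields $\bigs(V')$ as an independent set of size $m|V|-\binom{m+1}{2}$. It then remains to show that $\bigs(V')$ is spanning, whence it is a basis and condition (iii) of Proposition \ref{Prop6} holds: if it were not spanning, then $\sigma(\bigs(V'))$ would be a proper flat, hence contained in some hyperplane $H$ (standard matroid fact), and the complementary cocircuit $C^*=K(V)\setminus H$ would avoid every vertex of $V'$, forcing $|V(C^*)|\leq |V|-m$ in violation of D1. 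The main conceptual point is precisely this last translation of D1 (a statement about cocircuits) into the statement that $\bigs(V')$ is contained in no proper flat; everything else is bookkeeping via Proposition \ref{Prop6} and Lemma \ref{extensionlemma}.
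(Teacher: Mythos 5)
Your proof is correct, and it differs from the paper's in the forward direction while essentially matching it for the converse. For the converse, the paper argues that $\bigs(V')$ is spanning because its complement $K(V\setminus V')$ is coindependent under (D1); you argue the dual statement, that $\bigs(V')$ is contained in no hyperplane because the complementary cocircuit would have too few vertices. These are the same observation read through complementation, and both then compute the rank from the basis $\bigs(V')$ and invoke Proposition \ref{Prop6}. In the forward direction, however, the paper proceeds by contraposition directly with Lemma \ref{cocircprop}(i): given a candidate cocircuit $C$ with $|V(C)|\leq |V|-m$, it extends an edge of $C$ to a copy of $K_{m+2}$ avoiding the rest of $V(C)$ so that this circuit meets $C$ in a single element, which the circuit--cocircuit intersection property forbids. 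You instead recycle the ``$\bigs(V')$ is a basis'' fact (now available because $\A$ is an abstract rigidity matroid, via the Corollary to Lemma \ref{extensionlemma}) and show a small cocircuit's complementary hyperplane would have to contain a whole basis. Both are clean two-line arguments; yours has the stylistic advantage of a single organizing idea (constrain hyperplanes by $\bigs(V')$) running through both directions, whereas the paper's forward argument is more self-contained in that it invokes only condition (ii) of Proposition \ref{Prop6} rather than the basis Corollary. One small remark: your argument implicitly relies on $\emptyset$ not being a cocircuit, which holds here because $\bigs(V')\neq\emptyset$ forces positive rank; the paper deals with this case explicitly.
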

\begin{proof}
	First of all let $\A$ be an $m$-dimensional abstract rigidity matroid.
	We have already seen in Remark \ref{rem:vsm} that all vertex stars minus $m-1$ edges are
        cocircuits in abstract rigidity matroids, so (D2) holds.
	We prove (D1) by contraposition, and so let  $C\subseteq K(V)$ be a set with $|V(C)| \leq |V|-m$.
	If $C$ is empty, then it is independent in any matroid and thus not a cocircuit.
	Let then $C\neq \emptyset$. Thus, $C$ contains an edge with
        endpoints, say, $v_1,v_2\in V(C)$.
	Because of the constraint on the cardinality of $V(C)$, we can
        choose
 $$v_3,\ldots,v_{m+2}\notin V(C).$$
	By Proposition \ref{Prop6}.(ii), $K(\{v_1,\ldots,v_{m+2}\})$ is a circuit
        in $\A$ which intersects the set $C$ in the single edge $v_1v_2$.
	Therefore, $C$ cannot be a cocircuit by Proposition
        \ref{cocircprop}.

        Now let $\A$ be a matroid satisfying (D1) and (D2).
	By Proposition \ref{Prop6}, to prove that $\A$ is an abstract
        rigidity matroid, we only need to verify that $r(\A)=m|V|-{m+1\choose 2}$.
	So choose an $m$-element subset $V'$ of $V$. Since, by (D2), all
        vertex stars minus $m-1$ edges are cocircuits, $\bigs(V')$ is
        independent by Lemma \ref{extensionlemma}. Moreover, its complement is $K(V\setminus V' )$ - a complete graph on $|V|-m$ vertices -
	and thus coindependent. So $\bigs(V')$ is also a spanning set,
        hence a basis, and we can use it to show that the rank of
        $\A$ has the correct value, as follows. 
	  \begin{align*}
	      r(\A) &=r(\bigs(V')) =|K(V')|+m(|V|-m) \\
		&      = {m\choose 2}+m(|V|-m) = m|V|+\left({m\choose 2}-m^2\right) = m|V|-{m+1\choose 2} 
	  \end{align*}
\end{proof}


We now state hyperplane axioms for abstract rigidity matroids, as we
will be concerned extensively with hyperplanes in the next
section. They are easily derived from the cocircuit axioms by complementation.
\begin{Definition}\label{def:delta}
  For $v\in V$ and $A\subseteq V\setminus \{v\}$ let
$$
  \Delta_v^A:= K(\{v\}\cup A) \cup K( V\setminus \{v\}).
  $$
\end{Definition}

\begin{Theorem} \ \\
 Let $\A$ be a matroid on $K(V)$ with hyperplanes $\Hy$.
 Then $\A$ is a $m$-dimensional abstract rigidity matroid if and only if $\Hy$ fulfills:
  \begin{itemize}
  \item[H1.] if $H\in\Hy$ then at most $m-1$ vertices in $V(H)$ have valence $|V|-1$.
  \item[H2.] for all $v\in V$, the sets of the form $\Delta_v^A$
  where $\vert A \vert = m-1$ 
  (i.e., $m-1$-valent $0$-extensions of $K(V\setminus v)$) are hyperplanes.  
 \end{itemize}
\end{Theorem}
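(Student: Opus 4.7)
The plan is to observe that the statement is the complementary counterpart of the previous Proposition. By Lemma~\ref{cocircprop}(ii), the assignment $H\mapsto K(V)\setminus H$ is a bijection between the collection $\Hy$ of hyperplanes of $\A$ and the collection of cocircuits of $\A$, so it suffices to check that the hyperplane conditions (H1)--(H2) translate, under this bijection, to the cocircuit conditions (D1)--(D2) of the previous Proposition.

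For the equivalence (D2)$\Leftrightarrow$(H2), I would fix $v\in V$ and $A\subseteq V\setminus\{v\}$ with $|A|=m-1$ and compute
\[
K(V)\setminus\bigl(\vstar(v)\setminus\{va\mid a\in A\}\bigr) \;=\; K(V\setminus\{v\})\cup\{va\mid a\in A\}.
\]
Since $A\subseteq V\setminus\{v\}$ forces $K(A)\subseteq K(V\setminus\{v\})$, the right-hand side coincides with $\Delta_v^A=K(\{v\}\cup A)\cup K(V\setminus\{v\})$. As $v$ and $A$ vary, complementation thus identifies the family of vertex stars minus $m-1$ edges with the family of all $\Delta_v^A$ with $|A|=m-1$, and the two membership conditions are equivalent.

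For (D1)$\Leftrightarrow$(H1), let $C$ be any cocircuit and $H=K(V)\setminus C$ its complementary hyperplane. For a vertex $v\in V$, the condition $v\notin V(C)$ says that no edge of $\vstar(v)$ lies in $C$, equivalently that all of $\vstar(v)$ lies in $H$, equivalently that $v$ has valence $|V|-1$ in $H$. Hence $|V|-|V(C)|$ equals the number of vertices of valence $|V|-1$ in $H$, and the strict inequality $|V(C)|>|V|-m$ translates exactly to ``at most $m-1$ such vertices''. For $|V|\geq 2$, every vertex of full valence in $H$ automatically lies in $V(H)$, so this matches (H1) verbatim.

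The proof is essentially a bookkeeping exercise in matroid duality, so no substantial obstacle is expected. The only delicate point is to keep track of the strict inequality in (D1) and confirm it corresponds to ``at most $m-1$'' (rather than ``at most $m$'') in (H1), which is immediate from the translation above.
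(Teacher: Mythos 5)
Your proposal is correct and is precisely the argument the paper has in mind: the paper states this theorem with no written proof, remarking only that the hyperplane axioms ``are easily derived from the cocircuit axioms by complementation,'' and your complementation argument via Lemma~\ref{cocircprop}(ii) fills that in faithfully. Both translations check out: $K(V)\setminus(\vstar(v)\setminus\{va\mid a\in A\})=\Delta_v^A$ gives (D2)$\Leftrightarrow$(H2), and identifying the vertices absent from $V(C)$ with the full-valence vertices of $H=K(V)\setminus C$ gives (D1)$\Leftrightarrow$(H1), with the strict inequality $|V(C)|>|V|-m$ correctly landing on ``at most $m-1$.''
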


We now move to a characterization of abstract rigidity matroids in
terms of bases.

\begin{Theorem} \ \\
  Let $\A$ be a matroid on $K(V)$ with bases $\B$.
  Then $\A$ is a $m$-dimensional abstract rigidity matroid if and only if $\B$ fulfills:
  \begin{itemize}
  \item[B1.] if $B\in\B$ then $V(B)=V$ and every vertex in $B$ has at least valence $m$,
  \item[B2.] for all $V'\subseteq V$ with $|V'|=m$, the set $\bigs(V')$ is a basis.
 \end{itemize}
\end{Theorem}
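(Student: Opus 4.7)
My plan is to prove the theorem by reducing to Proposition \ref{Prop6}, specifically verifying its cocircuit condition (i) and rank condition (iii). This is more direct than arguing from Definition \ref{DefiRigMat} and avoids reasoning about the closure operator.

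For the forward direction, (B2) is already established by the Corollary following Lemma \ref{extensionlemma}, so I only need to check (B1). I would argue by contradiction: if $B \in \B$ and some vertex $v \in V$ had valence at most $m-1$ in $B$, then (since $|V| \geq m+1$) one can select $m-1$ edges of $\vstar(v)$ that include all $B$-edges incident to $v$. The resulting vertex star minus $m-1$ edges is a cocircuit by Remark \ref{rem:vsm}, yet is disjoint from $B$, contradicting the fact that every basis meets every cocircuit. The case of valence $0$ simultaneously gives $V(B)=V$.

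For the backward direction, condition (iii) of Proposition \ref{Prop6} is immediate from (B2): for any $V' \subseteq V$ with $|V'|=m$, $\bigs(V')$ is a basis, so
\[
r(\A) = |\bigs(V')| = \binom{m}{2} + m(|V|-m) = m|V| - \binom{m+1}{2}.
\]
The substantial step is condition (i) --- that every vertex star minus $m-1$ edges is a cocircuit. Fix $v \in V$ and $F \subseteq \vstar(v)$ with $|F|=m-1$. I would show that $H := K(V\setminus\{v\}) \cup F$, the complement of $\vstar(v)\setminus F$, is a hyperplane. On one hand, $H$ contains no basis, since any $B \subseteq H$ would have $v$-valence at most $|F|=m-1$, contradicting (B1). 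On the other hand, for any $e = vw \in \vstar(v)\setminus F$ I would exhibit a concrete basis inside $H \cup \{e\}$: letting $V' \subseteq V\setminus\{v\}$ be the $m$ endpoints (other than $v$) of the edges in $F \cup \{e\}$, the edges of $\bigs(V')$ incident to $v$ are exactly those going to $V'$, i.e.\ $F \cup \{e\}$, while all other edges of $\bigs(V')$ have both endpoints in $V\setminus\{v\}$ and thus lie in $K(V\setminus\{v\}) \subseteq H$; hence $\bigs(V') \subseteq H \cup \{e\}$, and by (B2) this is a basis.

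The main obstacle is the maximality half of the hyperplane property in the backward direction. It is resolved by the explicit choice of $V'$ as the $v$-neighborhood of $F\cup\{e\}$, which makes $\bigs(V')$ fit exactly inside $H\cup\{e\}$. In this sense, the bases guaranteed by (B2) are precisely the family needed to match the combinatorics of vertex stars, so the two axioms interlock cleanly through Proposition \ref{Prop6}.
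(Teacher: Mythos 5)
Your proof is correct but takes a genuinely different route from the paper's. The paper proves this theorem by reducing to the hyperplane axioms (H1), (H2) established just before it: the forward direction derives (B1) from (H2) and (B2) from (H1), and the backward direction verifies (H1) and (H2), piggybacking on the hyperplane theorem to conclude. You instead bypass the hyperplane theorem entirely and close the loop through Proposition \ref{Prop6}: your forward direction for (B1) uses Remark \ref{rem:vsm} together with the fact that every basis meets every cocircuit, a clean cocircuit argument the paper does not use here; your backward direction verifies Proposition \ref{Prop6}(i) and (iii) directly, with Lemma \ref{cocircprop}(ii) translating the cocircuit claim into the hyperplane claim you then prove. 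The combinatorial heart is the same in both proofs --- $\Delta_v^A$ contains no basis by (B1), while $\bigs(A\cup\{w\})$ fits inside $\Delta_v^A \cup \{vw\}$ by (B2) --- but yours is more self-contained, not depending on the prior hyperplane theorem, and it is more explicit in the maximality step, spelling out the choice of $V'$ that the paper's phrase \emph{``by (B2), $H\cup\{e\}$ contains a basis''} leaves for the reader to reconstruct.
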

 \begin{proof}
  Suppose $\A$ is a $m$-dimensional abstract rigidity matroid and
  there is a basis $B$ of $\A$ with a vertex $v$ of degree less than
  $m$. Let $N(v)$ denote the neighbors of $v$ in the graph induced by
  $B$, and choose some $A\in \vstar(v)$ with $\vert A\vert = m-1$ and $N(v)\subseteq A$.
  Then, $B\subseteq \Delta_v^A$, which is a hyperplane by (H2) and
  should then not have maximal rank. Thus no such basis exists and
  (B1) holds. 
  
  To prove (B2) notice that, whenever $\vert V'\vert =m$,
  $\bigs(V')$ is a $m$-valent-0-extensions of some $K_m$, thus
  independent. Moreover, such a $\bigs(V')$ cannot be contained in
  any hyperplane, because $m$ of its vertices have valence $|V|-1$,
  thus it violates (H2).
  
  We now turn to the reverse direction and let $\B$ fulfill (B1) and
  (B2). We will prove that, then, the set of hyperplanes of the given
  matroid satisfies (H1) and (H2). First, by (B2) every set of edges inducing
  a graph with $m$ or more vertices of valence $|V|-1$ contains a
  basis: thus no hyperplane has more than $m-1$ vertices of valence $|V|-1$ and (H1) holds.
  To prove (H2) consider any $v\in V$ and any $A\subseteq V\setminus
  \{v\}$ with $\vert A \vert =m-1$, and let $H:=\Delta_v^A$. Note
  that, by (B1), $H$ cannot contain a basis because $v$ has degree
  $m-1$ in $H$. Moreover, any edge $e$ not in $H$ must have $v$ as an
  endpoint. So, by (B2) $H\cup\{e\}$ contains a basis. We conclude
  that $H$ is a maximal set which does not contain a basis - i.e., a hyperplane - and (H2) holds.	      
 \end{proof}
 We close this section with circuit axioms.

 \begin{Theorem} \ \\
  Let $\A$ be a matroid on $K(V)$ with set of circuits $\C$.
  Then $\A$ is a $m$-dimensional abstract rigidity matroid if and only if $\C$ fulfills:
  \begin{itemize}
  \item[Z1.] if $C\in\C$, no vertex of $V(C)$ has valence less than $m+1$.
  \item[Z2.] all $K_{m+2}$ are circuits.
 \end{itemize}
\end{Theorem}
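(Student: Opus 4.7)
The plan is to invoke Proposition \ref{Prop6} via the dictionary between hyperplanes and cocircuits. For the forward direction, (Z2) is exactly Remark \ref{rem:vsm}, and (Z1) is Lemma \ref{extensionlemma}(i) applied to the cocircuit assertion of that same remark; this direction requires essentially no further work.

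For the reverse direction, assume (Z1) and (Z2). Since (Z2) is precisely condition (ii) of Proposition \ref{Prop6}, it suffices to verify condition (i) of that proposition, namely that every vertex star minus $m-1$ edges is a cocircuit. A direct computation shows that the complement of such a set (for a vertex $v$ and a choice of $A \subseteq V \setminus \{v\}$ of size $m-1$ indexing the removed edges) is exactly $\Delta_v^A$ as in Definition \ref{def:delta}; so by Lemma \ref{cocircprop}(ii) it is enough to show that each $\Delta_v^A$ with $|A| = m-1$ is a hyperplane.

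This in turn requires two verifications. \emph{(a) $\Delta_v^A$ is closed:} by Lemma \ref{cocircprop}(iii) I need only check circuits $C$ with $|C \setminus \Delta_v^A| \leq 1$. The only nontrivial case is that the missing edge is some $e = vu$ with $u \notin A \cup \{v\}$; then $v \in V(C)$, and all other edges of $C$ incident to $v$ must lie in $\vstar(v) \cap \Delta_v^A = \{vu' : u' \in A\}$, giving $v$ valence at most $1 + (m-1) = m$ in $C$, contradicting (Z1). \emph{(b) Any edge added to $\Delta_v^A$ lies in a spanning set:} for $e = vw$ with $w \notin A \cup \{v\}$ and any other missing edge $vu$ with $u \notin A \cup \{v, w\}$, the vertex set $\{v, w, u\} \cup A$ has cardinality $m+2$, so $K(\{v, w, u\} \cup A)$ is a circuit by (Z2); all of its edges except $vu$ lie in $\Delta_v^A \cup \{e\}$, whence $vu \in \cl{\Delta_v^A \cup \{e\}}$. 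The main minor obstacle is the boundary case $|V| = m+1$, where such a third vertex $u$ does not exist; but then (Z2) is vacuous, and (Z1) forces every nonempty candidate circuit to have a vertex of valence at most $m$, so $\A$ has no circuits and is the free matroid, which by Lemma \ref{lem:rank} is the unique $m$-dimensional abstract rigidity matroid on $K_{m+1}$.
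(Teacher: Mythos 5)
Your proof is correct but takes a genuinely different route from the paper's. After assuming (Z1) and (Z2), the paper reduces to the basis characterization theorem proved just before: it shows directly that each $\bigs(V')$ is a basis (every missing edge lies in an $(m+2)$-clique circuit by (Z2), and (Z1) forbids any circuit inside $\bigs(V')$), establishing (B2), and then derives (B1) from (Z1) by the same ``valence at most $m$ in a circuit'' trick you use in your closedness argument. You instead go through Proposition~\ref{Prop6} directly, verifying that each $\Delta_v^A$ is a hyperplane (closedness via (Z1) and Lemma~\ref{cocircprop}(iii), spanning-after-one-edge via (Z2)), so that vertex stars minus $m-1$ edges are cocircuits, and then invoke Proposition~\ref{Prop6}(i)+(ii). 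Both approaches are equally elementary; yours sidesteps the already-proved basis cryptomorphism at the cost of a slightly longer verification, and it has the virtue of making the cocircuit structure explicit. Two small remarks: your separate treatment of $|V|=m+1$ is unnecessary, since in that case your step~(b) is vacuous in a harmless way ($\Delta_v^A\cup\{e\}$ is already all of $K(V)$) and step~(a) goes through unchanged; and the appeal to Lemma~\ref{lem:rank} there establishes only that any $m$-dimensional abstract rigidity matroid on $K_{m+1}$ must be free, not by itself that the free matroid is one --- a point that is true but would need a one-line check of (C1), (C2) (or simply omission, given the previous remark).
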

\begin{proof}
  Let $\A$ be a $m$-dimensional abstract rigidity matroid.
   Then, Proposition \ref{Prop6}.(ii) gives directly (Z2). Proposition
   \ref{Prop6}.(i) says that all vertex stars minus $m-1$ edges are
   cocircuits of $\A$ and so, by Lemma \ref{extensionlemma}.(i), (Z1) holds.
  
  For the reverse implication, let $\C$ satisfy (Z1) and (Z2), and let $\B$ denote the set of bases of $\A$.
  To prove that (B2) holds, choose then any subset $V'\subseteq V$ with $|V'|=m$, and consider the set $\bigs(V')$.
  Every edge $xy$ not in $\bigs(V')$ is contained in the circuit $K(V'\cup \{x,y\})$ and thus 
  the closure of $\bigs(V')$ is $K(V)$. Also, $\bigs(V')$ does not
  contain any circuit: by (Z1),
  all vertices of $V\setminus V'$ have valence $m$, and so any circuit in $\bigs(V')$ must be a subset of $K(V')$, which
  itself is independent because it only contains vertices of valence $m-1$. 
  Summarizing, there is no circuit in $\bigs(V')$ and so it is
  independent, thus a basis and (B2) holds. In order to prove (B1),
  suppose by way of contradiction that there is a basis $B$ with a vertex $v$ of valence less than $m$.
  Then, there is a vertex $w\in V$ such that $vw\notin B$. 
  But $B\cup \{vw\}$ must be dependent: thus it contains a circuit $C$
  which must contain the $vw$.
  In particular, $v$ has at most valence $m$ in $C$, contradicting (Z1). Hence (B1) holds.
\end{proof}


 \section{Hyperplanes of abstract rigidity matroids}
 

All axiomatizations given in Section \ref{axioms} share a similar
structure: they require some ``prescribed substructure'' (e.g.,
conditions (H2), (B2), (Z2), (D2)) and impose some condition on the
valency of vertices (e.g., (H1), (B1), (Z1), (D1)). From a structural
point of view one can ask whether it is possible to characterize
anstract rigidity matroids from a ''prescribed substructure'' point of
view alone.

This question turns out to be intriguing and not trivial. We will 
investigate it from the vantage point of hyperplanes, where the
''prescribed substructures'' are hyperplanes of the form $\Delta_v^A$
for $\vert A \vert = m-1$ (see Definition \ref{def:delta}). We suggest
to look at a bigger family of ``prescribed substructures'', namely the following.

\begin{Definition}\label{df:Hm}
 Given a vertex set $V$ and any $m\geq 1$, define
 {\small \[ \Hm := \{ H = K(V_1) \cup K(V_2)  \mid V_i\nsubseteq V_j \;\mathrm{ for }\; i\neq j,
			  V_1\cup V_2 = V, |V_1\cap V_2|=m-1  \}  \]}

 \end{Definition}

 The family $\Hm$ consists of pairs of sets of edges of complete graphs
 which intersect in a complete graph on $m-1$ vertices. Figure
 \ref{fig:hyex} shows how this sets may look like. 


\begin{figure}[h]

\centering

\begin{tikzpicture}[line cap=round,line join=round,>=triangle 45,x=0.35cm,y=0.35cm]
\clip(-6,-15) rectangle (30,6);
\draw (11.62,3.49)-- (11.62,1.49);
\draw (11.62,1.49)-- (13.52,0.87);
\draw (13.52,0.87)-- (14.7,2.49);
\draw (14.7,2.49)-- (13.52,4.11);
\draw (13.52,4.11)-- (11.62,3.49);
\draw (11.62,1.49)-- (11.62,3.49);
\draw (11.62,3.49)-- (10.05,4.74);
\draw (10.05,4.74)-- (8.1,4.29);
\draw (8.1,4.29)-- (7.24,2.49);
\draw (7.24,2.49)-- (8.1,0.69);
\draw (8.1,0.69)-- (10.05,0.24);
\draw (10.05,0.24)-- (11.62,1.49);
\draw (11.62,1.49)-- (8.1,0.69);
\draw (11.62,1.49)-- (7.24,2.49);
\draw (11.62,1.49)-- (8.1,4.29);
\draw (11.62,1.49)-- (10.05,4.74);
\draw (11.62,3.49)-- (8.1,4.29);
\draw (11.62,3.49)-- (7.24,2.49);
\draw (11.62,3.49)-- (8.1,0.69);
\draw (11.62,3.49)-- (10.05,0.24);
\draw (10.05,4.74)-- (10.05,0.24);
\draw (10.05,4.74)-- (8.1,0.69);
\draw (10.05,4.74)-- (7.24,2.49);
\draw (8.1,4.29)-- (10.05,0.24);
\draw (8.1,4.29)-- (8.1,0.69);
\draw (7.24,2.49)-- (10.05,0.24);
\draw (11.62,1.49)-- (13.52,4.11)-- (13.52,0.87)-- (11.62,3.49)-- (14.7,2.49);
\draw (11.62,1.49)-- (14.7,2.49);
\draw (0.68,-9.56)-- (0.68,-7.56);
\draw (0.68,-7.56)-- (-0.61,-6.02);
\draw (-0.61,-6.02)-- (-2.58,-5.68);
\draw (-2.58,-5.68)-- (-4.31,-6.68);
\draw (-4.31,-6.68)-- (-4.99,-8.56);
\draw (-4.99,-8.56)-- (-4.31,-10.44);
\draw (-4.31,-10.44)-- (-2.58,-11.44);
\draw (-2.58,-11.44)-- (-0.61,-11.09);
\draw (-0.61,-11.09)-- (0.68,-9.56);
\draw (3.68,-8.41)-- (0.68,-7.56);
\draw (3.68,-8.41)-- (0.68,-9.56);
\draw (0.68,-9.56)-- (-4.99,-8.56);
\draw (0.68,-7.56)-- (-4.99,-8.56);
\draw (-4.31,-6.68)-- (-2.58,-11.44);
\draw (0.68,-7.56)-- (-2.58,-5.68);
\draw (0.68,-7.56)-- (-4.31,-6.68);
\draw (0.68,-7.56)-- (-4.31,-10.44);
\draw (0.68,-7.56)-- (-2.58,-11.44);
\draw (0.68,-7.56)-- (-0.61,-11.09);
\draw (0.68,-9.56)-- (-0.61,-6.02);
\draw (0.68,-9.56)-- (-2.58,-5.68);
\draw (0.68,-9.56)-- (-4.31,-6.68);
\draw (0.68,-9.56)-- (-4.31,-10.44);
\draw (0.68,-9.56)-- (-2.58,-11.44);
\draw (-0.61,-6.02)-- (-0.61,-11.09)-- (-4.99,-8.56)-- (-2.58,-5.68)-- (-2.58,-11.44);
\draw (-0.61,-6.02)-- (-4.99,-8.56);
\draw (-0.61,-6.02)-- (-4.31,-6.68);
\draw (-0.61,-6.02)-- (-4.31,-10.44);
\draw (-0.61,-6.02)-- (-2.58,-11.44);
\draw (-0.61,-11.09)-- (-2.58,-5.68);
\draw (-2.58,-5.68)-- (-4.31,-10.44);
\draw (-0.61,-11.09)-- (-4.31,-6.68);
\draw (-0.61,-11.09)-- (-4.31,-10.44);
\draw (-2.58,-11.44)-- (-4.99,-8.56);
\draw (-4.31,-10.44)-- (-4.31,-6.68);
\draw (20.12,0.61)-- (22.12,0.61);
\draw (22.12,0.61)-- (23.12,2.34);
\draw (23.12,2.34)-- (22.12,4.08);
\draw (22.12,4.08)-- (20.12,4.08);
\draw (20.12,4.08)-- (19.12,2.34);
\draw (19.12,2.34)-- (20.12,0.61);
\draw (23.12,2.34)-- (24.12,0.61);
\draw (24.12,0.61)-- (26.12,0.61);
\draw (26.12,0.61)-- (27.12,2.34);
\draw (27.12,2.34)-- (26.12,4.08);
\draw (26.12,4.08)-- (24.12,4.08);
\draw (24.12,4.08)-- (23.12,2.34);
\draw (20.12,4.08)-- (20.12,0.61);
\draw (22.12,4.08)-- (22.12,0.61);
\draw (24.12,4.08)-- (24.12,0.61);
\draw (26.12,4.08)-- (26.12,0.61);
\draw (27.12,2.34)-- (23.12,2.34);
\draw (23.12,2.34)-- (19.12,2.34);
\draw (19.12,2.34)-- (22.12,4.08);
\draw (19.12,2.34)-- (22.12,0.61);
\draw (20.12,0.61)-- (23.12,2.34);
\draw (23.12,2.34)-- (20.12,4.08);
\draw (20.12,4.08)-- (22.12,0.61);
\draw (20.12,0.61)-- (22.12,4.08);
\draw (24.12,4.08)-- (26.12,0.61);
\draw (24.12,0.61)-- (26.12,4.08);
\draw (26.12,0.61)-- (23.12,2.34);
\draw (23.12,2.34)-- (26.12,4.08);
\draw (27.12,2.34)-- (24.12,4.08);
\draw (24.12,0.61)-- (27.12,2.34);
\draw (12.71,-10.83)-- (13.87,-8.37);
\draw (13.87,-8.37)-- (12.67,-5.93);
\draw (12.67,-5.93)-- (10.01,-5.35);
\draw (10.01,-5.35)-- (7.9,-7.06);
\draw (7.9,-7.06)-- (7.92,-9.78);
\draw (7.92,-9.78)-- (10.06,-11.46);
\draw (10.06,-11.46)-- (12.71,-10.83);
\draw (12.71,-10.83)-- (7.9,-7.06);
\draw (7.9,-7.06)-- (13.87,-8.37);
\draw (13.87,-8.37)-- (7.92,-9.78);
\draw (7.92,-9.78)-- (12.67,-5.93);
\draw (12.67,-5.93)-- (10.06,-11.46);
\draw (10.06,-11.46)-- (10.01,-5.35);
\draw (12.71,-10.83)-- (10.01,-5.35);
\draw (13.87,-8.37)-- (10.01,-5.35);
\draw (10.01,-5.35)-- (7.92,-9.78);
\draw (7.92,-9.78)-- (12.71,-10.83);
\draw (12.71,-10.83)-- (12.67,-5.93);
\draw (12.67,-5.93)-- (7.9,-7.06);
\draw (7.9,-7.06)-- (10.06,-11.46);
\draw (10.06,-11.46)-- (13.87,-8.37);
\draw (15.87,-8.37)-- (13.87,-8.37);
\draw (18.91,-8.43)-- (20.91,-10.43);
\draw (20.91,-10.43)-- (22.91,-8.43);
\draw (22.91,-8.43)-- (20.91,-6.43);
\draw (20.91,-6.43)-- (18.91,-8.43);
\draw (18.91,-8.43)-- (22.91,-8.43);
\draw (20.91,-10.43)-- (20.91,-6.43);
\draw (23.91,-8.43)-- (25.91,-6.43);
\draw (25.91,-6.43)-- (25.91,-10.43);
\draw (25.91,-10.43)-- (27.91,-8.43);
\draw (27.91,-8.43)-- (25.91,-6.43);
\draw (25.91,-10.43)-- (23.91,-8.43);
\draw (23.91,-8.43)-- (27.91,-8.43);
\draw (1.17,4.61)-- (-1.83,4.61);
\draw (-1.83,4.61)-- (-0.33,2.01);
\draw (-0.33,2.01)-- (1.17,4.61);
\draw (1.17,4.61)-- (3.67,3.61);
\draw (3.67,3.61)-- (1.67,0.11);
\draw (-1.83,4.61)-- (3.67,2.11);
\draw (3.67,2.11)-- (3.67,3.61);
\draw (-0.33,2.01)-- (1.67,0.11);
\draw (3.67,2.11)-- (1.67,0.11);
\draw (-0.33,2.01)-- (3.67,2.11);
\draw (3.67,2.11)-- (1.17,4.61);
\draw (1.67,0.11)-- (-1.83,4.61);
\draw (-1.83,4.61)-- (3.67,3.61);
\draw (1.67,0.11)-- (1.17,4.61);
\draw (-0.33,2.01)-- (3.67,3.61);
\draw (-4.33,3.61)-- (-1.83,4.61);
\draw (-4.33,3.61)-- (1.17,4.61);
\draw (-4.33,3.61)-- (-0.33,2.01);
\draw (-4.33,3.61)-- (-2.33,0.11);
\draw (-4.33,3.61)-- (-4.33,2.11);
\draw (-2.33,0.11)-- (-4.33,2.11);
\draw (-2.33,0.11)-- (-1.83,4.61);
\draw (-2.33,0.11)-- (1.17,4.61);
\draw (-2.33,0.11)-- (-0.33,2.01);
\draw (-4.33,2.11)-- (-0.33,2.01);
\draw (-4.33,2.11)-- (-1.83,4.61);
\draw (-4.33,2.11)-- (1.17,4.61);
\draw (-3.5,-1.1) node[anchor=north west] {a) $|V|=9$, $m=4$};
\draw (7.5,-1.1) node[anchor=north west] {b) $|V|=10$, $m=3$};
\draw (19.5,-1.1) node[anchor=north west] {c) $|V|=11$, $m=2$};
\draw (-5,-12.4) node[anchor=north west] {d) $|V|=10$, $m=3$};
\draw (8,-12.4) node[anchor=north west] {e) $|V|=8$, $m=2$};
\draw (20,-12.4) node[anchor=north west] {f) $|V|=8$, $m=1$};
\begin{scriptsize}
\fill [color=black] (11.62,3.49) circle (2.0pt);
\fill [color=black] (11.62,1.49) circle (2.0pt);
\fill [color=black] (13.52,0.87) circle (2.0pt);
\fill [color=black] (14.7,2.49) circle (2.0pt);
\fill [color=black] (13.52,4.11) circle (2.0pt);
\fill [color=black] (10.05,4.74) circle (2.0pt);
\fill [color=black] (8.1,4.29) circle (2.0pt);
\fill [color=black] (7.24,2.49) circle (2.0pt);
\fill [color=black] (8.1,0.69) circle (2.0pt);
\fill [color=black] (10.05,0.24) circle (2.0pt);
\fill [color=black] (0.68,-9.56) circle (2.0pt);
\fill [color=black] (0.68,-7.56) circle (2.0pt);
\fill [color=black] (-0.61,-6.02) circle (2.0pt);
\fill [color=black] (-2.58,-5.68) circle (2.0pt);
\fill [color=black] (-4.31,-6.68) circle (2.0pt);
\fill [color=black] (-4.99,-8.56) circle (2.0pt);
\fill [color=black] (-4.31,-10.44) circle (2.0pt);
\fill [color=black] (-2.58,-11.44) circle (2.0pt);
\fill [color=black] (-0.61,-11.09) circle (2.0pt);
\fill [color=black] (3.68,-8.41) circle (2.0pt);
\fill [color=black] (20.12,0.61) circle (2.0pt);
\fill [color=black] (22.12,0.61) circle (2.0pt);
\fill [color=black] (23.12,2.34) circle (2.0pt);
\fill [color=black] (22.12,4.08) circle (2.0pt);
\fill [color=black] (20.12,4.08) circle (2.0pt);
\fill [color=black] (19.12,2.34) circle (2.0pt);
\fill [color=black] (24.12,0.61) circle (2.0pt);
\fill [color=black] (26.12,0.61) circle (2.0pt);
\fill [color=black] (27.12,2.34) circle (2.0pt);
\fill [color=black] (26.12,4.08) circle (2.0pt);
\fill [color=black] (24.12,4.08) circle (2.0pt);
\fill [color=black] (12.71,-10.83) circle (2.0pt);
\fill [color=black] (13.87,-8.37) circle (2.0pt);
\fill [color=black] (12.67,-5.93) circle (2.0pt);
\fill [color=black] (10.01,-5.35) circle (2.0pt);
\fill [color=black] (7.9,-7.06) circle (2.0pt);
\fill [color=black] (7.92,-9.78) circle (2.0pt);
\fill [color=black] (10.06,-11.46) circle (2.0pt);
\fill [color=black] (15.87,-8.37) circle (2.0pt);
\fill [color=black] (18.91,-8.43) circle (2.0pt);
\fill [color=black] (20.91,-10.43) circle (2.0pt);
\fill [color=black] (22.91,-8.43) circle (2.0pt);
\fill [color=black] (20.91,-6.43) circle (2.0pt);
\fill [color=black] (23.91,-8.43) circle (2.0pt);
\fill [color=black] (27.91,-8.43) circle (2.0pt);
\fill [color=black] (25.91,-10.43) circle (2.0pt);
\fill [color=black] (25.91,-6.43) circle (2.0pt);
\fill [color=black] (1.17,4.61) circle (2.0pt);
\fill [color=black] (-1.83,4.61) circle (2.0pt);
\fill [color=black] (-0.33,2.01) circle (2.0pt);
\fill [color=black] (3.67,3.61) circle (2.0pt);
\fill [color=black] (1.67,0.11) circle (2.0pt);
\fill [color=black] (3.67,2.11) circle (2.0pt);
\fill [color=black] (-4.33,3.61) circle (2.0pt);
\fill [color=black] (-2.33,0.11) circle (2.0pt);
\fill [color=black] (-4.33,2.11) circle (2.0pt);
\end{scriptsize}
\end{tikzpicture}

\caption{Examples of elements of $\Hm$.}
\label{fig:hyex}

\end{figure}





\begin{Question}\label{qu:theone}
  Is every matroid $\A$ on the ground set $K(V)$ which satisfies
  $\Hm\subseteq \Hy(\A)$ an $m$-dimensional abstract rigidity matroid?
\end{Question}

Evidence towards an affirmative answer to this question is given in
the next two propositions.

 \begin{Proposition} \label{Hsubset} \ \\
 Let $\Am$ be a $m$-dimensional abstract rigidity matroid on $K(V)$ with hyperplanes $\Hy$.
 Then $\Hm \subseteq \Hy$.
\end{Proposition}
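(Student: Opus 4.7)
Fix an arbitrary $H = K(V_1) \cup K(V_2) \in \Hm$ and set $W := V_1 \cap V_2$ (so $|W| = m-1$), $A := V_1 \setminus V_2$, $B := V_2 \setminus V_1$. By the defining condition $V_i \not\subseteq V_j$ the sets $A$ and $B$ are nonempty. My strategy is to verify the two properties characterizing a hyperplane: $H$ is a proper closed subset, and adding any edge outside $H$ produces a spanning set.

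The first part is immediate. Applying axiom (C1) to $E := K(V_1)$ and $F := K(V_2)$, whose supports intersect in $|W| = m-1 < m$ vertices, yields $\cl{H} = \cl{E \cup F} \subseteq K(V_1) \cup K(V_2) = H$, so $H$ is closed. Since any edge $ab$ with $a \in A$ and $b \in B$ lies outside $H$, we also have $H \subsetneq K(V)$.

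The crux of the argument is to show that $\cl{H \cup \{ab\}} = K(V)$ for every edge $ab$ with $a \in A$, $b \in B$. The plan here is a two-step rigidity extension. First, $K(V_2)$ is rigid (being a complete graph on its support), and the set $K(V_2) \cup \{av : v \in W \cup \{b\}\}$ is an $m$-valent-$0$-extension of $K(V_2)$ that sits inside $H \cup \{ab\}$: indeed, the edges $av$ for $v \in W$ belong to $K(V_1)$ since $a, v \in V_1$, and the edge $ab$ is the one we added. By Lemma~\ref{extensionlemma}(iii) this extension is rigid, so $K(V_2 \cup \{a\}) \subseteq \cl{H \cup \{ab\}}$. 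Second, $K(V_2 \cup \{a\})$ and $K(V_1)$ are both rigid and share the $m$ vertices of $W \cup \{a\}$, so (C2) forces their union to be rigid; its support is $V_1 \cup V_2 = V$, hence its closure is $K(V)$. Combining these, $\cl{H \cup \{ab\}} = K(V)$, and we conclude that $H$ is a maximal closed proper subset, i.e.\ a hyperplane.

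I do not foresee any real obstacle: the only care needed is combinatorial bookkeeping of the vertex sets, namely ensuring that the new vertex $a$ is attached by \emph{exactly} $m$ edges (so Lemma~\ref{extensionlemma}(iii) applies with $k = m$), and that the intersection $W \cup \{a\}$ has \emph{exactly} $m$ elements (so (C2) applies). Both follow directly from $|W| = m-1$ and the choice of $a \in A \subseteq V_1 \setminus V_2$, $b \in B \subseteq V_2 \setminus V_1$.
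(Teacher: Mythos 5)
Your proof is correct and follows essentially the same strategy as the paper's: show $H$ is closed (and proper) by applying~(C1) to $K(V_1)$ and $K(V_2)$, then show adding any missing edge $ab$ makes the closure all of $K(V)$ by ``gluing'' rigid complete subgraphs across an $m$-vertex overlap via~(C2). The one structural difference is in how you build the bridge: the paper introduces the auxiliary complete graph $K(W\cup\{v_1,v_2\})\cong K_{m+1}$ and applies (C2) directly, while you instead use the $m$-valent-$0$-extension Lemma~\ref{extensionlemma}(iii) to get $K(V_2\cup\{a\})$ into the closure and then apply (C2) once more with $K(V_1)$ --- a very mild repackaging, since that lemma is itself a consequence of~(C2).
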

\begin{proof}
 Let $H\in\Hm$ such that $H=K(V_1)\cup K(V_2)$ with $V_1,V_2\subseteq V$, $V_1\cup V_2 = V$, $|V_1\cap V_2|=m-1$ and
 $V_i\nsubseteq V_j$ for $i\neq j$. We have:
 \begin{align*}
  \cl{H}=\cl{K(V_1) \cup K(V_2)} \stackrel{(C1)}{\subseteq} K(V(K(V_1)))\cup K(V(K(V_2))) \\
  = K(V_1) \cup K(V_2) = H \subseteq \cl{H}.
 \end{align*}
 Thus $\cl{H}=H\neq K(V)$.\\
  Let $V':= V_1\cap V_2 $ and let $e\in K(V)\setminus H$, $e=v_1v_2$.\\
  Then w.l.o.g. $v_i\in V_i$ ($i=1,2$), $v_1,v_2\notin V'$.\\
  Note that $K(V'\cup \{v_1, v_2\})$ is as a complete edge-set rigid, as is $K(V_1)$.
  In addition $|V(K(V'\cup \{v_1,v_2\}))\cap V_1|=|V'\cup \{v_1\}|=m$, so by (C2), 
  also their union $K(V'\cup \{v_1,v_2\})\cup K(V_1)$ is rigid, as is $K(V'\cup \{v_1,v_2\})\cup K(V_2)$.
  The union of these sets is $H\cup e$ and
\[ \cl{H\cup\{ e \}}= \cl{(K(V'\cup \{ v_1,v_2\})\cup K(V_1)) \cup K((V'\cup \{v_1,v_2\})\cup K(V_2))} \stackrel{(C2)}{=} K(V). \]
\end{proof}

\begin{remark}
 Notice that, in general, the inclusion $\Hm\subseteq \Hy(\A)$ is strict. The figure below illustrates this.
  \begin{figure}[h]
\centering
\definecolor{qqzzzz}{rgb}{0,0.0,0}
\begin{tikzpicture}[line cap=round,line join=round,>=triangle 45,x=0.28cm,y=0.28cm]
\clip(-4,-2.7) rectangle (36,7.5);
\draw (0,0)-- (0,6);
\draw (0,6)-- (-3,3);
\draw (-3,3)-- (0,0);
\draw (0,0)-- (4,0);
\draw (4,0)-- (7,3);
\draw (7,3)-- (4,6);
\draw (4,6)-- (0,0);
\draw (4,6)-- (4,0);
\draw (0,0)-- (7,3);
\draw (14,0)-- (11,3);
\draw (11,3)-- (14,6);
\draw (14,0)-- (14,6);
\draw (18,0)-- (14,6);
\draw (18,0)-- (14,0);
\draw (18,0)-- (11,3);
\draw (18,0)-- (18,6);
\draw (21,3)-- (18,6);
\draw (21,3)-- (18,0);
\draw (28,0)-- (28,6);
\draw (28,6)-- (25,3);
\draw (28,0)-- (25,3);
\draw (32,6)-- (32,0);
\draw (32,0)-- (35,3);
\draw (35,3)-- (32,6);
\draw [line width=0.4pt] (32,0)-- (28,0);
\draw [line width=1.2pt,color=qqzzzz] (32,6)-- (28,6);
\draw [line width=1.2pt,dash pattern=on 2pt off 6pt,color=qqzzzz] (18,6)-- (14,6);
\draw [line width=1.2pt,dash pattern=on 2pt off 6pt,color=qqzzzz] (4,6)-- (0,6);
\begin{scriptsize}
\fill [color=black] (0,0) circle (2pt);
\draw[color=black] (0,-.85) node {5};
\fill [color=black] (-3,3) circle (2pt);
\draw[color=black] (-3.5,3.75) node {6};
\fill [color=black] (0,6) circle (2pt);
\draw[color=black] (0,6.94) node {1};
\fill [color=black] (4,6) circle (2pt);
\draw[color=black] (4,6.94) node {2};
\fill [color=black] (4,0) circle (2pt);
\draw[color=black] (4,-.85) node {4};
\fill [color=black] (7,3) circle (2pt);
\draw[color=black] (7.32,3.75) node {3};
\fill [color=black] (11,3) circle (2pt);
\draw[color=black] (10.5,3.75) node {6};
\fill [color=black] (14,0) circle (2pt);
\draw[color=black] (14,-.85) node {5};
\fill [color=black] (14,6) circle (2pt);
\draw[color=black] (14,6.94) node {1};
\fill [color=black] (18,6) circle (2pt);
\draw[color=black] (18,6.94) node {2};
\fill [color=black] (18,0) circle (2pt);
\draw[color=black] (18,-.85) node {4};
\fill [color=black] (21,3) circle (2pt);
\draw[color=black] (21.31,3.75) node {3};
\fill [color=black] (25,3) circle (2pt);
\draw[color=black] (24.5,3.75) node {6};
\fill [color=black] (28,0) circle (2pt);
\draw[color=black] (28,-.85) node {5};
\fill [color=black] (28,6) circle (2pt);
\draw[color=black] (28,6.94) node {1};
\fill [color=black] (32,6) circle (2pt);
\draw[color=black] (32,6.94) node {2};
\fill [color=black] (35,3) circle (2pt);
\draw[color=black] (35.33,3.75) node {3};
\fill [color=black] (32,0) circle (2pt);
\draw[color=black] (32,-.85) node {4};
\draw[color=qqzzzz] (30.11,6.94) node {$x$};
\draw[color=qqzzzz] (16.09,6.94) node {$x$};
\draw[color=qqzzzz] (2.1,6.94) node {$x$};
\draw[color=black] (2.1,-2.2) node {$H_1$};
\draw[color=black] (16.09,-2.2) node {$H_2$};
\draw[color=black] (30.11,-2.2) node {($H_1\cap H_2)\cup x$};
\end{scriptsize}
\end{tikzpicture}

\caption{An example where $|V|=6$ and $m=2$.}

\end{figure}
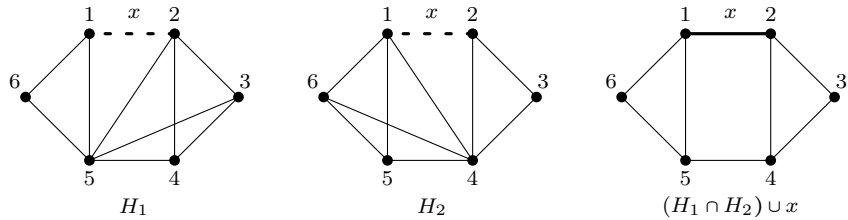

  The edge-sets on the left and in the middle are elements of $\mathcal{H}_2(V)$ and $x$ is not in their union. 
  By the hyperplane axioms for matroids, the set on the right has to be subset of another hyperplane which is at least 
  $2$-vertex-connected, hence not an element of $\mathcal{H}_2(V)$. 
\end{remark}



 \begin{Proposition} \label{HC1} \ \\
 Let $\A$ be matroid on $K(V)$ with hyperplanes $\Hy$ and $\Hm \subseteq \Hy$.
 Then its closure-operator fulfills (C1).
\end{Proposition}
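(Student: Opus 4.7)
The plan is first to reduce (C1) to a closedness statement, and then to construct separating hyperplanes from $\Hm$ by a short case analysis. Writing $V_1 := V(E)$ and $V_2 := V(F)$, the inclusions $E\subseteq K(V_1)$, $F\subseteq K(V_2)$ together with monotonicity of $\sigma$ give
\[
\cl{E\cup F}\ \subseteq\ \cl{K(V_1)\cup K(V_2)},
\]
so (C1) will follow once I prove that $K(V_1)\cup K(V_2)$ is closed in $\A$ whenever $|V_1\cap V_2|<m$. Since every closed set equals the intersection of the hyperplanes containing it, and by hypothesis $\Hm\subseteq\Hy$, it suffices to exhibit, for each edge $e\in K(V)\setminus (K(V_1)\cup K(V_2))$, some $H\in\Hm$ with $K(V_1)\cup K(V_2)\subseteq H$ and $e\notin H$.

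For the construction I would partition $V = I\sqcup A\sqcup B\sqcup C$ with $I:=V_1\cap V_2$, $A:=V_1\setminus V_2$, $B:=V_2\setminus V_1$, $C:=V\setminus(V_1\cup V_2)$; by hypothesis $|I|\le m-1$. Writing $e=v_1v_2$, the condition $e\notin K(V_1)\cup K(V_2)$ means $\{v_1,v_2\}$ lies in neither $V_1$ nor $V_2$. I then split into two cases. \emph{Case 1: at least one endpoint, say $v_1$, lies in $C$.} I will put $V_1\cup V_2\subseteq W_1$, adjoin elements of $C\setminus\{v_1\}$ to $W_1$ until $|W_1|=m$ (possible since $|V|\ge m+1$), choose an $(m-1)$-subset $T\subseteq W_1\setminus\{v_2\}$, and set $W_2:=(V\setminus W_1)\cup T$. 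One then checks $|W_1\cap W_2|=m-1$, $W_1\cup W_2=V$, $v_1\in W_2\setminus W_1$ and $v_2\in W_1\setminus W_2$, so $H=K(W_1)\cup K(W_2)\in\Hm$ contains $K(V_1)\cup K(V_2)$ but not $e$. \emph{Case 2: both endpoints of $e$ lie in $V_1\cup V_2$.} Up to symmetry this forces $v_1\in A$ and $v_2\in B$. I would place $V_1\subseteq W_1$ and $V_2\subseteq W_2$ (so $I\subseteq W_1\cap W_2$), pad the intersection by some $J\subseteq V\setminus(I\cup\{v_1,v_2\})$ of size $m-1-|I|$ (possible again by $|V|\ge m+1$), assigning each $j\in J$ to the opposite component from its natural one so that $W_1\cap W_2=I\cup J$ and the separations $v_1\notin W_2$, $v_2\notin W_1$ are preserved, and finally split the remaining vertices of $C$ arbitrarily between $W_1$ and $W_2$ to ensure $W_1\cup W_2=V$.

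The delicate point will be the sub-case of Case~1 in which $v_2\in I$: then $v_2$ lies in both $V_1$ and $V_2$, and the naive strategy of placing $V_1,V_2$ on opposite sides of the partition is doomed, because $v_2$ would be forced into $W_1\cap W_2$ together with either $v_1$'s side, preventing the separation of $e$. The remedy is precisely the trick used in Case~1 of putting \emph{all} of $V_1\cup V_2$ on a single side and exploiting the slack afforded by $|V|\ge m+1$ to fit $W_2$ with the required intersection size. Once $W_1,W_2$ are defined in each case, verifying the defining conditions of $\Hm$ (Definition~\ref{df:Hm}) and the inclusion/separation properties reduces to elementary counting, so the argument is complete.
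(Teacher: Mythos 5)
Your overall strategy differs from the paper's: where the published argument first shows all complete edge-sets are flats (using $\He$), then exhibits a \emph{single} hyperplane of $\Hm$ whose intersection with $K(V(E\cup F))$ equals $K(V(E))\cup K(V(F))$, you instead separate each forbidden edge $e$ individually by a hyperplane of $\Hm$ containing $K(V(E))\cup K(V(F))$. That is a perfectly legitimate and, in fact, more robust route (it sidesteps the single-hyperplane intersection claim), and your Case~2 construction is essentially correct: the count $|V\setminus(I\cup\{v_1,v_2\})|=|V|-|I|-2\geq m-1-|I|$ is exactly the place where $|V|\geq m+1$ is used.

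However, there is a genuine gap in Case~1. You set $W_1\supseteq V_1\cup V_2$ and then adjoin points of $C\setminus\{v_1\}$ ``until $|W_1|=m$.'' This is impossible whenever $|V(E)\cup V(F)|>m$, which the hypotheses do not exclude; the only constraint is $|V(E)\cap V(F)|<m$, so $V(E)\cup V(F)$ can be arbitrarily large (take, e.g., $m=2$, $|V|=10$, $V(E)=\{1,\dots,5\}$, $V(F)=\{5,\dots,9\}$, and $e=\{1,10\}$: then $|V_1\cup V_2|=9$ and no $W_1$ of size $2$ can contain it). Moreover, if $v_2\in C$ but happens not to be adjoined to $W_1$, your definition $W_2:=(V\setminus W_1)\cup T$ would put $v_2$ into $W_2$, defeating the separation. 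Both problems disappear if you simply take $W_1:=V\setminus\{v_1\}$ and $W_2:=\{v_1\}\cup T$ with $T$ any $(m-1)$-subset of $V\setminus\{v_1,v_2\}$ (which exists since $|V|\geq m+1$); then $H=\Delta_{v_1}^T\in\He\subseteq\Hm$ contains $K(V_1)\cup K(V_2)$ and not $e$. With that repair, your proof goes through.
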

\begin{proof}
 Let $E,F\subseteq K(V)$ such that $|V(E)\cap V(F)|<m$. 
 For given $v\in V$, the complement of $\vstar(v)$ minus $m-1$ arbitrary edges is a hyperplane. 
 So we are able to find hyperplanes which intersect into $K(V\setminus\{v\})$.
 This means that for all $v\in V$, the set $K(V\setminus\{v\})$ is closed in $\A$ and thus is every complete edgeset.
 Hence $\cl{E}\subseteq K(V(E))$ and $\cl{F}\subseteq K(V(F))$, so we may assume w.l.o.g. that 
 $V(E)\nsubseteq V(F)$ and vice versa. 
 Now we are able to choose $V_1,V_2\subseteq V$ such that $K(V_1)\cup K(V_2)$ is an element of $\Hm$ - a hyperplane -
 and 
 \[ \left( K(V_1)\cup K(V_2) \right) \cap \left( K(V(E\cup F)) \right) = K(V(E))\cup K(V(F)). \]
 Because the sets which intersect lefthandside are closed in $\A$, $ K(V(E))\cup K(V(F))$ must also be closed. 
 It contains $E\cup F$, so $\cl{E\cup F}\subseteq K(V(E))\cup K(V(F))$.
\end{proof}

We conclude this section with one more token of the advantage of
considering the set $\Hm$: the following proposition gives an
immediate proof, for all dimension, of a strenghtening of the problem
posed in \cite[Exercise 4.7.]{combrig} for dimension $2$.

\begin{Proposition} \label{connect} \ \\
 Let $\A$ be a matroid on $K(V)$, with hyperplanes $\Hy$ and $\Hm\subseteq\Hy$.
 Furthermore let $E$ be a rigid edgeset (i.e. $\cl{E}=K(V(E))$). Then $(V(E),E)$ is at least $m$-vertex-connected.
\end{Proposition}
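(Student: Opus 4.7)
The plan is to argue by contradiction: from any hypothetical separator of fewer than $m$ vertices of $(V(E), E)$, I will produce an element of $\Hm$ whose corresponding hyperplane would contain $E$ but not all of $K(V(E))$, thereby contradicting the rigidity of $E$.

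First, suppose $(V(E), E)$ is not $m$-vertex-connected, so there exist a set $S_0 \subseteq V(E)$ with $|S_0| \leq m-1$ and a partition $V(E) \setminus S_0 = A_0 \sqcup B_0$ into two non-empty parts such that no edge of $E$ joins $A_0$ to $B_0$. The next step is an ``inflation'': I want to construct $V_1, V_2 \subseteq V$ with $V_1 \cup V_2 = V$, $|V_1 \cap V_2| = m-1$, and both $V_1 \setminus V_2$ and $V_2 \setminus V_1$ non-empty. The starting configuration will be $V_1 := A_0 \cup S_0$ and $V_2 := B_0 \cup S_0 \cup (V \setminus V(E))$, which already gives $V_1 \cap V_2 = S_0$. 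If $|S_0| < m-1$, I enlarge the intersection by placing additional vertices into $V_1 \cap V_2$: such vertices may come from $V \setminus V(E)$ (currently in $V_2$ only, add them to $V_1$), from $A_0$ (currently in $V_1$ only, add them to $V_2$, leaving at least one vertex of $A_0$ outside the intersection), or from $B_0$ (analogously).

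Once the inflation is done, $K(V_1) \cup K(V_2)$ is, by Definition \ref{df:Hm}, an element of $\Hm$, and therefore, by assumption on $\A$, a hyperplane. The key inclusion $E \subseteq K(V_1) \cup K(V_2)$ then follows from the separator property, since any edge of $E$ between $V_1 \setminus V_2 \subseteq A_0$ and $V_2 \setminus V_1 \subseteq B_0$ would cross the forbidden cut. Taking closures yields $\cl{E} \subseteq K(V_1) \cup K(V_2)$; but any edge between a remaining vertex of $A_0$ and a remaining vertex of $B_0$ lies in $K(V(E)) = \cl{E}$ while avoiding both $K(V_1)$ and $K(V_2)$, giving the desired contradiction.

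The only delicate point is the inflation step: one must check that enough vertices are available to raise $|V_1 \cap V_2|$ to exactly $m-1$ without emptying $V_1 \setminus V_2$ or $V_2 \setminus V_1$. A small counting argument using the standing hypothesis $|V| \geq m+1$ provides $|V| - |S_0| - 2 \geq m - 1 - |S_0|$ admissible moves, which is precisely what is needed. This is the only real bookkeeping, and it makes the argument work uniformly for all admissible sizes of $V(E)$.
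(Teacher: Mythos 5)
Your proof is correct and follows essentially the same strategy as the paper's: from a hypothetical separator of size less than $m$, build $V_1, V_2$ with $K(V_1)\cup K(V_2)\in\Hm$ containing $E$ and exhibit an edge of $K(V(E))$ that this hyperplane misses. The only difference is that you make explicit the ``inflation'' padding of the separator to size exactly $m-1$ (with the check $|V|\geq m+1$), whereas the paper glosses over this by referring to the analogous construction in its proof of Proposition \ref{HC1}.
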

\begin{proof}
Otherwise choose a vertex-set $U\subseteq V(E)$ such that $|U|<m$ and deleting $U$ out of $(V(E),E)$ would leave a disconnected graph.
Like in the proof before, we are able to find $V_1,V_2\subseteq V$ such that $K(V_1)\cup K(V_2)$ is a hyperplane in $\Hm$, 
$E\subseteq K(V_1)\cup K(V_2)$ and $U\subseteq V_1\cap V_2$.
We can construct $V_1,V_2$ in such a way, that at least one vertex $v_1$ of one component of the graph after deleting $U$ lies in $V_1\setminus V_2$, 
while another vertex $v_2$ of another such component lies in $V_2\setminus V_1$.
This implies that $v_1,v_2\in V(E)$ but $v_1v_2\notin K(V_1)\cup K(V_2)$. Therefore $v_1v_2\notin \cl{E}=K(V(E))$. This is a contradiction.
\end{proof}

\subsection{An inductive characterization}

The last part of our study moves form the observation that certain
restrictions of rigidity matroids are again rigidity matroids.

\begin{Definition}
  Let $\A$ be an $m$-dimensional abstract rigidity matroid on the set
  $K(V)$. Given $X\subseteq V$, let $\A[X]$ denote the restriction of
  $\A$ to the set $K(X)$.
\end{Definition}

\begin{Lemma}\label{Lem:her}
  Let $\A$ be an $m$-dimensional abstract rigidity matroid on $V$. For
  every $X\subseteq V$, $\vert X \vert \geq m+1$, $\A[X]$ is an
  $m$-dimensional rigidity matroid.
\end{Lemma}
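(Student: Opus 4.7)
The plan is to verify that $\A[X]$ satisfies any two of the three conditions of Proposition~\ref{Prop6}, from which the conclusion follows immediately. The natural choice is to check conditions (ii) (all $K_{m+2}$ are circuits) and (iii) (the rank equals $m|X|-\binom{m+1}{2}$), since these transfer very cleanly from $\A$ to $\A[X]$.

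First I would record a standard matroid fact implicit in Definition~\ref{df:restr}: if $Y\subseteq K(X)$, then $Y$ is independent in $\A[X]$ iff it is independent in $\A$, and consequently the circuits of $\A[X]$ are precisely the circuits of $\A$ contained in $K(X)$, and $\rk_{\A[X]}(Y)=\rk_{\A}(Y)$. This is routine from the closure definition $\sigma_{\A[X]}(Y)=\sigma_{\A}(Y)\cap K(X)$, but since the paper takes closure as the primitive notion it is worth stating explicitly.

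Granted this, condition (ii) for $\A[X]$ is immediate: any $K_{m+2}\subseteq K(X)$ is a circuit of $\A$ by Proposition~\ref{Prop6}(ii), hence a circuit of $\A[X]$. Condition (iii) is equally direct: since $|X|\geq m+1$, Lemma~\ref{lem:rank} applied to $\A$ gives $\rk_{\A}(K(X))=m|X|-\binom{m+1}{2}$, and this equals $\rk_{\A[X]}(K(X))$ by the observation above. Proposition~\ref{Prop6} then concludes that $\A[X]$ is an $m$-dimensional abstract rigidity matroid.

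There is no real obstacle here; the one place to be careful is the compatibility of independence, circuits and rank with the closure-based definition of restriction, which is the only point where the proof interacts with the slightly nonstandard (closure-first) presentation of matroids adopted in the paper. Once that bookkeeping is settled, the statement is essentially a corollary of Proposition~\ref{Prop6} combined with Lemma~\ref{lem:rank}.
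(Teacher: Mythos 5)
Your proposal is correct and follows essentially the same route as the paper: verify conditions (ii) and (iii) of Proposition~\ref{Prop6} for $\A[X]$, noting that $K_{m+2}$ circuits are inherited and that Lemma~\ref{lem:rank} pins down the rank. The extra remarks you include about why independence, circuits, and rank are preserved under the closure-based definition of restriction are correct and merely make explicit the bookkeeping the paper leaves implicit.
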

\begin{proof}
  We check condition (ii) and (iii) of Proposition
  \ref{Prop6}. Condition (ii)
  is clearly inherited from $\A$ (and possibly empty) and Lemma
  \ref{lem:rank} implies (iii).
\end{proof}

The following definition identifies the ``prescribed substructures''
through which we will be able to give an intrinsic characterization of
abstract rigidity matroids.

\begin{Definition}
  Recall Definition \ref{df:Hm} and let
$$
\He:=\{  K(V_1) \cup K(V_2) \in \Hm \mid \vert V_1\vert = m
\}=\{\Delta_v^A \mid \vert A \vert = m-1\}
$$
\end{Definition}

The fact that the subfamily $\He$ enforces nontrivial structure on any
matroid with $\He\subseteq \Hy(\A)$can be
already seen from Lemma \ref{extensionlemma}, whose premise can be
rephrased as ``$\He\subseteq \Hy(\A)$''. Moreover, the same
requirement is equivalent to condition (i) in Proposition
\ref{Prop6} - hence, answering our question requires a detailed study
of the rank function of matroids on
$K(V)$ for which $\Hm^{(1)}$ is a subset of the hyperplane set. 

Our main result in this section is the following.

\begin{Theorem}\label{theo:2dim}
  Let $\vert V \vert \geq m+1$ and $\A$ a matroid on $K(V)$.
 The matroid $\A$ is an $m$-dimensional abstract rigidity matroid if
 and only if, for all $X\subseteq V$, $$\Hm^{(1)} (K(X))\subseteq
  \Hy(\A[X]).$$
\end{Theorem}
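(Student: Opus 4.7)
The plan is to prove the two implications separately, with the reverse direction carried out by induction on $n := |V|$.

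For the forward direction, if $\A$ is an $m$-dimensional abstract rigidity matroid, Lemma \ref{Lem:her} gives that $\A[X]$ is again such a matroid whenever $|X|\geq m+1$. Remark \ref{rem:vsm} then says that every vertex star minus $m-1$ edges in $K(X)$ is a cocircuit of $\A[X]$, and Lemma \ref{cocircprop}(ii) turns these into hyperplanes by complementation; a short combinatorial check identifies these complementary hyperplanes with the sets in $\He(K(X))$. For $|X|<m+1$ the family $\He(K(X))$ is empty and the required inclusion holds vacuously.

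For the reverse direction, the base case $n=m+1$ is quick: direct enumeration shows that $\He(K(V))$ consists of the sets $K(V)\setminus\{e\}$ as $e$ ranges over edges of $K(V)$. The hypothesis makes each singleton $\{e\}$ a cocircuit, so every edge is a coloop, $\A$ is the free matroid on $K(V)$, and all three conditions of Proposition \ref{Prop6} are satisfied (with (ii) vacuous because no $K_{m+2}$ sits inside $K(V)$).

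For the inductive step I assume $n \geq m+2$ and verify conditions (i) and (iii) of Proposition \ref{Prop6}. Condition (i) is immediate, since the complement of each $\Delta_v^A \in \He(K(V))$ is precisely a vertex star minus $m-1$ edges. To establish (iii), I fix a vertex $v \in V$, set $V' := V \setminus \{v\}$, and observe that the hypothesis is inherited by $\A[V']$ (double restrictions collapse); so by induction $\A[V']$ is an $m$-rigidity matroid, and Lemma \ref{lem:rank} yields $\rk(\A[V']) = m(n-1) - \binom{m+1}{2}$. Choose any $A \subseteq V'$ with $|A|=m-1$, so that $\Delta_v^A = K(V') \cup \{vw : w\in A\}$ is a hyperplane of $\A$ and $\rk(\A) = \rk(\Delta_v^A)+1$. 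I compute $\rk(\Delta_v^A)$ by sandwiching: any independent $I \subseteq \Delta_v^A$ splits into $I \cap K(V')$ and $I \cap \{vw : w \in A\}$, giving the upper bound $\rk(\A[V']) + (m-1)$; conversely, taking a basis of $\A[V']$ and attaching $v$ through the $(m-1)$-valent-$0$-extension $\{vw : w\in A\}$ produces, by Lemma \ref{extensionlemma}(ii), an independent set in $\Delta_v^A$ of size $\rk(\A[V']) + (m-1)$. Putting these together gives $\rk(\A) = mn - \binom{m+1}{2}$, and Proposition \ref{Prop6} concludes.

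The main obstacle, as I see it, is primarily organizational: both of the ingredients needed in the inductive step -- condition (i) of Proposition \ref{Prop6} and the applicability of Lemma \ref{extensionlemma} (whose premise is itself (i)) -- must be drawn from the single hypothesis $\He(K(V)) \subseteq \Hy(\A)$. Once one notices that the rank of the chosen hyperplane $\Delta_v^A$ decouples cleanly into a contribution from $K(V')$ (controlled by induction) and a contribution from the extending edges at $v$ (controlled by Lemma \ref{extensionlemma}), the computation falls into place.
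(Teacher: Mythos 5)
Your proposal is correct and reaches the same conclusion but by a genuinely different route in the reverse direction. The paper's argument is built on Lemma \ref{lem:twoparts}, which --- assuming only $\He\subseteq\Hy(\A)$ --- produces a saturated chain of flats from $K(V)$ down to $K(V\setminus v_0)$ by intersecting $m$ suitably chosen hyperplanes from $\He$ one at a time (each intersection removing exactly one edge), giving $r(K(V))-r(K(V\setminus v_0))=m$ without any reference to the internal structure of the restricted matroid; it then iterates this down a filtration of $V$ and anchors the computation at level $m$ with Lemma \ref{lem:bottom}. You instead run a genuine induction on $|V|$: the induction hypothesis gives the full statement that $\A[V']$ is an $m$-dimensional abstract rigidity matroid (and hence its rank via Lemma \ref{lem:rank}), and you extract $r(K(V))-r(K(V'))=m$ from a two-sided estimate of $r(\Delta_v^A)$ for a single hyperplane $\Delta_v^A\in\He$ --- the upper bound by the disjoint decomposition $\Delta_v^A = K(V')\sqcup\{vw:w\in A\}$, the lower bound by Lemma \ref{extensionlemma}(ii). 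This is a clean and self-contained alternative with a pleasant explicit base case at $|V|=m+1$. One small detail should be supplied: in the lower bound you apply Lemma \ref{extensionlemma}(ii) to a basis $B'$ of $\A[V']$ together with the edges $\{vw:w\in A\}$, and for this to be a legitimate $(m-1)$-valent-$0$-extension of $B'$ you need $A\subseteq V(B')$, i.e.\ that bases of $\A[V']$ cover all of $V'$. This is true but deserves a sentence: since $\A[V']$ is an abstract rigidity matroid by the induction hypothesis, (C1) gives $K(V')=\sigma(B')\subseteq K(V(B'))$, hence $V(B')=V'$; alternatively one may take $B'=\bigs(W)$ for any $W\subseteq V'$ with $|W|=m$. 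With that line added, your proof is complete.
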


\begin{remark}
  The proof will actually work also with milder assumptions: it is
  enough to assume that there exists an enumeration $v_1,\ldots,v_k$
  of the vertices such that $\Hy_2^{(1)} (K(\{v_1,\ldots
  ,v_j\}))\subseteq \Hy(\A[\{v_1,\ldots ,v_j\}])$ for all $j=2,\ldots
  k$.
\end{remark}

The proof of the theorem relies on the following two general lemmas.

\begin{Lemma}\label{lem:twoparts}
    Let $m\geq 2$, $\vert V \vert \geq m+1$ and let $\A$ be a matroid on the
  ground set $K(V)$. Choose any $v_0\in V$.
  \begin{itemize}
  \item If all sets in $\He$ are closed in $\A$, $K(V\setminus v_0)$
    is closed in $\A$.
  \item If $\He\subseteq \Hy(\A)$, the rank of the flat $K(V\setminus v_0)$ in $\A$ is $r(K(V))-m$.
  \end{itemize}
\end{Lemma}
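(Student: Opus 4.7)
The argument splits naturally along the two assertions, and both hinge on a single observation: each $\Delta_{v_0}^A\in\He$ differs from $K(V\setminus v_0)$ by exactly the $m-1$ edges $\{v_0 w : w\in A\}$.

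For the first statement, my plan is to exhibit $K(V\setminus v_0)$ as the intersection
\[
K(V\setminus v_0) \;=\; \bigcap_{A} \Delta_{v_0}^A,
\]
where $A$ ranges over all $(m-1)$-subsets of $V\setminus v_0$. The inclusion ``$\subseteq$'' is immediate from the definition of $\Delta_{v_0}^A$. For the reverse inclusion, given any edge $v_0 w$ with $w\in V\setminus v_0$, the hypothesis $|V|\geq m+1$ guarantees that $V\setminus\{v_0,w\}$ has at least $m-1$ elements, so we may choose an $(m-1)$-subset $A$ avoiding $w$, and then $v_0 w\notin \Delta_{v_0}^A$. Since each $\Delta_{v_0}^A$ is closed by assumption and intersections of closed sets are closed, $K(V\setminus v_0)$ is closed.

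For the second statement I would prove both bounds on the rank separately. For the lower bound, fix any $A\subseteq V\setminus v_0$ with $|A|=m-1$: the hyperplane $\Delta_{v_0}^A$ has rank $r(K(V))-1$ and is obtained from $K(V\setminus v_0)$ by adjoining the $m-1$ edges $\{v_0 w : w\in A\}$, so adding these one at a time gives $r(\Delta_{v_0}^A)\leq r(K(V\setminus v_0))+(m-1)$, whence $r(K(V\setminus v_0))\geq r(K(V))-m$. For the matching upper bound, let $B$ be a basis of $\A[K(V\setminus v_0)]$ and extend it to a basis $B'$ of $\A$. By Definition \ref{df:restr}, every edge of $K(V\setminus v_0)$ lies in the $\A$-closure of $B$, so all edges of $B'\setminus B$ must be incident to $v_0$. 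If fewer than $m$ such edges were added, then their non-$v_0$ endpoints form a set of size $<m$, which (using $|V|\geq m+1$) can be enlarged to some $A\subseteq V\setminus v_0$ with $|A|=m-1$; but then $B'\subseteq \Delta_{v_0}^A$, contradicting the fact that a basis is not contained in any hyperplane. Hence $|B'\setminus B|\geq m$, giving $r(K(V\setminus v_0))\leq r(K(V))-m$.

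The main obstacle is the upper bound in the second part: one must simultaneously control the number of edges needed to extend a restricted basis to a basis of $\A$ and arrange a hyperplane in $\He$ that captures all of them. The combinatorial room to engineer such an $A$ is precisely what the hypothesis $|V|\geq m+1$ supplies, which is why this bound — and not the lower bound — is the place where the hyperplane structure of $\He$ is genuinely used.
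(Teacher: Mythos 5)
Your proof is correct, and Part 2 takes a genuinely different route from the paper's. For Part 1 both arguments use that an intersection of closed sets is closed; the paper takes a chain of only $m$ hyperplanes $\Delta_{v_0}^{A_0\setminus a_j}$ whose running intersections terminate at $K(V\setminus v_0)$, while you intersect over all $(m-1)$-subsets $A$ and verify the intersection is exactly $K(V\setminus v_0)$ — substantively the same idea. For Part 2 the paper observes that these running intersections $F_0=K(V)\supsetneq F_1\supsetneq\dots\supsetneq F_m=K(V\setminus v_0)$ form a chain of flats in which each step removes exactly one edge, hence an unrefinable chain of flats of length $m$, and then invokes the standard fact that in such a chain the rank drops by exactly one at each step; both bounds drop out at once. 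You instead prove the two inequalities by separate mechanisms: the lower bound $r(K(V\setminus v_0))\ge r(K(V))-m$ from ``a hyperplane has corank one and differs from $K(V\setminus v_0)$ by $m-1$ edges,'' and the upper bound $r(K(V\setminus v_0))\le r(K(V))-m$ by extending a basis of the restriction and showing a basis of $\A$ cannot fit inside any $\Delta_{v_0}^A$. Your route avoids appealing to the unrefinable-chain lemma and makes visible exactly where the hyperplane hypothesis does its work (the upper bound), at the cost of a less unified, two-case argument. A small remark: your lower bound in fact only uses that some $\Delta_{v_0}^A$ is a hyperplane, and your upper bound uses that all $\Delta_{v_0}^A$ are, so the split also isolates precisely how much of $\He\subseteq\Hy(\A)$ each inequality consumes.
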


\begin{proof}
  Choose $v_0\in V$ and let $A:=V\setminus v_0$. Choose $a_1,\ldots,
  a_{m}\in A$ and let $A_0:=
  \{a_1,\ldots,a_{m)}\}$. Moreover, set $A_j:=\{v_0\}\cup A_0\setminus
  a_j$.

For $j=1,\ldots m-1$ let 
$$H_j:=
\Delta_{v_0}^{A_j}\in \He.$$
and 
$$
F_j:=\bigcap_{i\leq j} H_i.
$$
Moreover, set $F_0:=K(V)$ and let $r_0=r_\A(F_0)$. 

As intersections of hyperplanes, all $F_j$ are flats, so we have a
chain
$$
F_m\subsetneq F_{m-1} \subsetneq \ldots \subsetneq F_0
$$
in the lattice of flats of $\A$. Since  $K(V\setminus v_0)=F_{m}$, we
have proved the forst claim. 

For the second it will be enough to prove that $r_\A(F_j) = r_0 - j$.
First notice that, for
every $j>1$, $F_{j}$ covers $F_{j-1}$. In fact, by definition $F_0$
covers $F_1$ and, for $j>0$,
$$
F_j=K(A) \cup K(\{v_0, a_{j+1},\ldots, a_{m}\})
$$
so we see that the cardinality $\vert F_j\setminus F_{j-1}\vert
=1$. Therefore clearly there can be no $G$ with $F_j\subsetneq G \subsetneq
F_{j+1}$. The chain $F_m\subsetneq \ldots \subsetneq F_0$ is therefore
saturated, and its length thus equals the difference between the rank
of $F_0$ and the rank of $F_m$.
\end{proof}

\begin{Lemma}\label{lem:bottom}
    Let $m\geq 2$, $\vert V \vert \geq m+1$ and let $\A$ be a matroid on the
  ground set $K(V)$. If all sets in $\He$ are closed in $\A$, $K(V')$
    is independent in $\A$ for every $V'\subseteq V$ with $\vert V'
    \vert  = m$.
\end{Lemma}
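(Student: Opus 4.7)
The plan is to show, for every edge $e=uv\in K(V')$, that $e\notin\sigma(K(V')\setminus e)$. This edge-by-edge check is sufficient, because if $K(V')$ were dependent it would contain a circuit, and then any element of that circuit would lie in the closure of the remaining edges. For each such $e$ I will produce a closed set $H\in\He$ satisfying $K(V')\setminus e\subseteq H$ and $e\notin H$; this immediately gives $\sigma(K(V')\setminus e)\subseteq \sigma(H)=H$ and hence the desired conclusion.

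The construction of $H$ is direct. Since $|V|\geq m+1 > m = |V'|$, pick any $a\in V\setminus V'$ and set $A:=(V'\setminus\{u,v\})\cup\{a\}$. Then $|A| = (m-2)+1 = m-1$, $A\subseteq V\setminus u$, and $v\notin A$ (because $v\in V'$ while $a\notin V'$). Consequently $H:=\Delta_u^A = K(\{u\}\cup A)\cup K(V\setminus u)$ belongs to $\He$ and is closed in $\A$ by hypothesis.

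The two required inclusions are then a short case analysis. Any edge of $K(V')$ distinct from $uv$ either avoids $u$ entirely---in which case it lies in $K(V\setminus u)$---or has the form $uy$ with $y\in V'\setminus\{u,v\}$; since by construction $V'\setminus\{u,v\}\subseteq A$, such an edge lies in $K(\{u\}\cup A)$. In either case the edge lies in $H$, so $K(V')\setminus e\subseteq H$. Conversely, $e=uv$ lies in neither piece of $H$: it is excluded from $K(V\setminus u)$ by the presence of $u$, and from $K(\{u\}\cup A)$ by the absence of $v$ from $\{u\}\cup A$. Hence $e\notin H$.

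I do not foresee a genuine obstacle: the whole argument rests on a single concrete choice of $A$. The only delicate point is securing the vertex $a\in V\setminus V'$ needed to pad $V'\setminus\{u,v\}$ up to size $m-1$ while keeping $v$ out, and this is precisely what the hypothesis $|V|\geq m+1$ provides. (Note also that one could have first invoked Lemma \ref{lem:twoparts} to see that $K(V\setminus u)$ is itself closed, but this information is not actually needed for the argument above.)
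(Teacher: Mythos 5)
Your proof is correct and takes essentially the same approach as the paper's: you build the same set $A=(V'\setminus\{u,v\})\cup\{a\}$ with one auxiliary vertex outside $V'$, and use the corresponding $\Delta$-set in $\He$ as a closed set separating $uv$ from the rest of $K(V')$ (the paper writes $\Delta_v^A$ and phrases it as a contradiction via a circuit, but the roles of $u$ and $v$ are symmetric and the argument is identical).
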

\begin{proof}
Suppose by way of contradiction that $K(V')$ contains a circuit $C$, let
$uv$ be an element of this circuit and $A:=(V' \setminus \{v,u\}) \cup v_0$,
where $v_0$ is any element of $V\setminus V'$ (nonempty for
cardinality reasons). 
Then, $\Delta^A_v$ is closed in $\A$ and contains all of $K(V')$
except $uv$, a contradiction to the fact that $uv$ is in a circuit of $K(V')$.
\end{proof}

\begin{proof}[Proof of Theorem \ref{theo:2dim}]
One direction (``left to right'') is proved by Lemma
\ref{Lem:her}. For the other direction let $V_1\subseteq V_2\subseteq
\ldots \subseteq V_n$ be a filtration of $V$ such that $\vert V_i\vert = i$ for all
$i$. notice that the hereditarity
assumption allows us to apply Lemma \ref{lem:twoparts} recursively to get that 
$$
r(K(V_i))= r(K(V)) - m(\vert V \vert - \vert V_i \vert) 
$$


for all $i\geq m$, and with Lemma \ref{lem:bottom} we have
$r(K(V_m)={m \choose 2}$. Thus we can write
$$
{m \choose 2}= r(K(V)) - m(\vert V \vert - m ) 
$$
hence
$$
r(K(V)) = m\vert V \vert - m^2 + (m^2 -m)/2 =
m\vert V \vert - { m+1 \choose 2} 
$$
  and we conclude with Proposition \ref{Prop6}.
\end{proof}

\begin{Corollary}
  Question \ref{qu:theone} is now equivalent to the following 
  \begin{Question}
    Let $m\in \mathbb N_{>0}$ and $V$ a set with $\vert V \vert \geq
    m+1$. Is it true that, for any matroid $\A$ on the vertex set $K(V)$,
    $\Hm\subseteq \Hy(\A)$ implies $\He\subseteq \Hy(\A[X])$ for all
    $X$ with $\vert X\vert \geq m+1$? 
  \end{Question}
\end{Corollary}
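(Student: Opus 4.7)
The plan is to prove the equivalence by chaining together three earlier results: Theorem \ref{theo:2dim}, Lemma \ref{Lem:her}, and Proposition \ref{Hsubset}, together with the obvious inclusion $\He \subseteq \Hm$. The corollary asks us to show that the implication ``$\Hm \subseteq \Hy(\A) \Rightarrow \A$ is an $m$-dimensional abstract rigidity matroid'' is equivalent to the implication ``$\Hm \subseteq \Hy(\A) \Rightarrow \He \subseteq \Hy(\A[X])$ for all admissible $X$''. In both implications the hypothesis is the same, so I only need to relate the two conclusions.

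For the direction ``$\ref{qu:theone} \Rightarrow$ new question'', I would fix a matroid $\A$ with $\Hm \subseteq \Hy(\A)$ and assume Question \ref{qu:theone} has an affirmative answer, so that $\A$ is an $m$-dimensional abstract rigidity matroid. Then Lemma \ref{Lem:her} guarantees that $\A[X]$ is an $m$-dimensional abstract rigidity matroid for every $X\subseteq V$ with $\vert X\vert \geq m+1$. Applying Proposition \ref{Hsubset} to each such $\A[X]$ yields $\Hm(K(X))\subseteq \Hy(\A[X])$, and the trivial inclusion $\He \subseteq \Hm$ then gives the desired conclusion $\He\subseteq \Hy(\A[X])$.

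For the direction ``new question $\Rightarrow \ref{qu:theone}$'', I would again fix $\A$ with $\Hm\subseteq \Hy(\A)$ and assume the new question has an affirmative answer. Then $\He \subseteq \Hy(\A[X])$ for every $X\subseteq V$ with $\vert X \vert \geq m+1$. This is exactly the hypothesis of Theorem \ref{theo:2dim}, which directly allows me to conclude that $\A$ is an $m$-dimensional abstract rigidity matroid, confirming the affirmative answer to Question \ref{qu:theone}.

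There is essentially no substantial obstacle here: the work has already been done in Theorem \ref{theo:2dim} (which converts the hereditary $\He$-condition into the rigidity property) and in Proposition \ref{Hsubset} (which, combined with Lemma \ref{Lem:her}, supplies the reverse direction). The only point requiring a modicum of care is to make sure both questions are parameterised over the same class of matroids, so that the hypothesis $\Hm \subseteq \Hy(\A)$ in each can be matched up unambiguously; this is already built into the statement of the new question inside the corollary.
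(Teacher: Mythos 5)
Your proposal is correct and matches the argument the paper intends: both implications share the hypothesis $\Hm \subseteq \Hy(\A)$, the forward direction follows by chaining Lemma \ref{Lem:her}, Proposition \ref{Hsubset}, and the trivial inclusion $\He\subseteq\Hm$, and the converse is exactly the ``right to left'' direction of Theorem \ref{theo:2dim}. The only very minor simplification available is to observe that the forward direction is also an immediate consequence of the ``left to right'' part of Theorem \ref{theo:2dim} (whose proof is Lemma \ref{Lem:her} plus the fact that $\He$ consists of hyperplanes in any ARM), so you did not strictly need to invoke Proposition \ref{Hsubset}; but your route is equally valid and no less elementary.
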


\bibliographystyle{plain}
\bibliography{rigidity}{}

\begin{thebibliography}{1}

\bibitem{CraRo}
Henry~H. Crapo and Gian-Carlo Rota.
\newblock {\em On the foundations of combinatorial theory: {C}ombinatorial
  geometries}.
\newblock The M.I.T. Press, Cambridge, Mass.-London, preliminary edition, 1970.

\bibitem{combrig}
Jack Graver, Brigitte Servatius, and Herman Servatius.
\newblock {\em Combinatorial rigidity}, volume~2 of {\em Graduate Studies in
  Mathematics}.
\newblock American Mathematical Society, Providence, RI, 1993.

\bibitem{graver}
Jack~E. Graver.
\newblock Rigidity matroids.
\newblock {\em SIAM J. Discrete Math.}, 4(3):355--368, 1991.

\bibitem{km2co}
Jack~E. Graver, Brigitte Servatius, and Herman Servatius.
\newblock Abstract rigidity in {$m$}-space.
\newblock In {\em Jerusalem combinatorics '93}, volume 178 of {\em Contemp.
  Math.}, pages 145--151. Amer. Math. Soc., Providence, RI, 1994.

\bibitem{laman}
G.~Laman.
\newblock On graphs and rigidity of plane skeletal structures.
\newblock {\em J. Engrg. Math.}, 4:331--340, 1970.

\bibitem{nguyen}
Viet-Hang Nguyen.
\newblock On abstract rigidity matroids.
\newblock {\em SIAM J. Discrete Math.}, 24(2):363--369, 2010.

\bibitem{oxley}
James Oxley.
\newblock {\em Matroid theory}, volume~21 of {\em Oxford Graduate Texts in
  Mathematics}.
\newblock Oxford University Press, Oxford, second edition, 2011.

\bibitem{prism}
Sachin Patkar, Brigitte Servatius, and K.~V. Subrahmanyam.
\newblock Abstract and generic rigidity in the plane.
\newblock {\em J. Combin. Theory Ser. B}, 62(1):107--113, 1994.

\end{thebibliography}

\end{document}